\newtheorem*{maintheorem}{Main Theorem}
\newtheorem{theorem}{Theorem}[section]
\newtheorem{lemma}[theorem]{Lemma}
\newtheorem{proposition}[theorem]{Proposition}
\theoremstyle{definition}
\newtheorem{example}[theorem]{Example}
\numberwithin{equation}{section}
\DeclareMathOperator{\add}{add}
\DeclareMathOperator{\Cok}{Cok}
\DeclareMathOperator{\End}{End}
\DeclareMathOperator{\Endu}{\underline{End}}
\DeclareMathOperator{\Ext}{Ext}
\DeclareMathOperator{\fd}{fd}
\DeclareMathOperator{\fg}{fg}
\DeclareMathOperator{\fp}{fp}
\DeclareMathOperator{\fcg}{fcg}
\DeclareMathOperator{\fcp}{fcp}
\DeclareMathOperator{\fdo}{\overline{fd}}
\DeclareMathOperator{\fpu}{\underline{fp}}
\DeclareMathOperator{\Hom}{Hom}
\DeclareMathOperator{\Homo}{\overline{Hom}}
\DeclareMathOperator{\Homu}{\underline{Hom}}
\let\Im\relax\DeclareMathOperator{\Im}{Im}
\DeclareMathOperator{\inj}{inj}
\DeclareMathOperator{\Ker}{Ker}
\DeclareMathOperator{\Mod}{Mod}
\DeclareMathOperator{\Modo}{\overline{Mod}}
\DeclareMathOperator{\Modu}{\underline{Mod}}
\DeclareMathOperator{\Ob}{Ob}
\DeclareMathOperator{\proj}{proj}
\DeclareMathOperator{\rad}{rad}
\DeclareMathOperator{\Tr}{Tr}
\newcommand{\1}{\mathds{1}}
\newcommand{\A}{\mathcal{A}}
\newcommand{\C}{\mathcal{C}}
\newcommand{\F}{\mathbb{F}}
\newcommand{\I}{\mathcal{I}}
\newcommand{\N}{\mathbb{N}}
\renewcommand{\P}{\mathcal{P}}
\newcommand{\FI}{\mathrm{FI}}
\newcommand{\VI}{\mathrm{VI}}
\newcommand{\op}{\mathrm{op}}
\newcommand{\Endc}{\End_\mathcal{C}}
\newcommand{\Enduc}{\Endu_\mathcal{C}}
\newcommand{\Extc}{\Ext^1_\mathcal{C}}
\newcommand{\Homc}{\Hom_\mathcal{C}}
\newcommand{\Homoc}{\Homo_\mathcal{C}}
\newcommand{\Homuc}{\Homu_\mathcal{C}}
\newcommand{\To}{\longrightarrow}
\newcommand{\xto}{\xrightarrow}
\newcommand{\norm}[1]{\left\lvert#1\right\rvert}
\newcommand{\set}[1]{\left\{#1\right\}}
\title[generalized Auslander-Reiten duality]
  {The generalized Auslander-Reiten duality on a module category}
\subjclass[2020]{16G70, 16D90}
\keywords{generalized Auslander--Reiten duality,
  modules over an essentially small category}
\author{Pengjie Jiao}
\address{Department of Mathematics,
  China Jiliang University,
  Hangzhou 310018, PR China}
\email{jiaopjie@cjlu.edu.cn}
\begin{document}

\begin{abstract}
  We characterize the generalized Auslander--Reiten duality on the category of finitely presented modules over some certain Hom-finite category.
  Examples include the category FI of finite sets with injections, and the one VI of finite dimensional vector spaces with linear injections over a finite field.
\end{abstract}

\maketitle

\section{Introduction}

Let $k$ be a field. The Auslander--Reiten theory is a powerful tool for the representation theory of finite dimensional algebras.
In an Ext-finite abelian category, it was shown that the Auslander--Reiten duality holds if and only if there exist enough almost split sequences;
see \cite[Theorem~1.1]{LenzingZuazua2004Auslander}.
Under some weaker hypotheses, its local version in an exact category was established;
see \cite[Theorem~3.6]{LiuNgPaquette2013Almost}.

Moreover, the \emph{generalized Auslander--Reiten duality} on a Hom-finite Krull--Schmidt exact $k$-category $\A$ was introduced in \cite{Jiao2018generalized}.
It consists of a pair of full subcategories $\A_r$ and $\A_l$, and the \emph{generalized Auslander--Reiten translation functors} $\tau$ and $\tau^-$. Here, $\tau$ and $\tau^-$ are mutually quasi-inverse equivalences between stable categories of $\A_r$ and $\A_l$.

Recall that $\FI$ is the category whose objects are finite sets and morphisms are injections, and $\VI$ is the one whose objects are finite dimensional vector spaces and morphisms are linear injections over a finite field $\F_q$.
$\FI$-modules were introduced in \cite{ChurchEllenbergFarb2015FI} to study sequences of representations of symmetric groups.
We mention that finitely generated modules over $\FI$ and $\VI$ satisfy Noetherian property;
see such as \cite[Theorem~3.7]{GanLi2015Noetherian}.

We attempt to apply the Auslander--Reiten theory to the study of finitely presented $\FI$-modules and $\VI$-modules.

To meet the requirements, we consider a Hom-finite small $k$-category $\C$. We assume the class of objects in $\C$ is precisely $\N$ with $\C(j,i) = 0$ for any $i < j$, and each finitely generated $\C$-module is Noetherian.
In this case, the category $\fp \C$ of finitely presented $\C$-modules is abelian.

We characterize the generalized Auslander--Reiten duality on $\fp \C$.

\begin{maintheorem}[see Theorem~\ref{thm:gAR}]
  Let $\C$ be as above. Then
  \(
    ( \fp \C )_r = \fp \C
  \)
  and
  \(
    ( \fp \C )_l = \add
    ( \fd \C \cup \set{\mbox{injective objects in $\fp \C$}} )
  \),
  and $D \Tr$ and $\Tr D$ induce the generalized Auslander--Reiten translation functors.
\end{maintheorem}

Here, $\fd \C$ is the category of finite dimensional $\C$-modules, and $\proj \C$ is the one of finitely generated projective $\C$-modules. Moveover, $D \Tr$ and $\Tr D$ are the classical Auslander--Reiten translation.

As we wish, the result can be applied to the categories of finitely presented modules over $\FI$, $\VI$ and some certain infinite quivers; see Section~4.


The paper is organized as follows.
Section~2 includes some basics of $\C$-modules.
Section~3 is dedicated to the proof of Theorem~\ref{thm:gAR}.
In Section~4, we apply the result to $\FI$, $\VI$ and some quivers.

\section{Module category}

Let $k$ be a field. Denote by $\Mod k$ the category of $k$-modules.

Let $\C$ be a Hom-finite essentially small $k$-category. Denote by $\Ob \C$ the class of objects in $\C$, and by $\C(a,b)$ the set of morphisms in $\C$ for any $a,b \in \Ob \C$.

\subsection{Modules}

A $\C$-module $M$ over $k$ means a covariant $k$-functor $M \colon \C \to \Mod k$.
A morphism $f \colon M \to N$ of $\C$-modules means a natural transformation.
In other words, it consists of a collection of maps $f_a \colon M(a) \to N(a)$ of $k$-modules for any $a \in \Ob \C$, such that
\(
  N(\alpha) \circ f_a = f_b \circ M(\alpha)
\)
for any $\alpha \in \C(a, b)$.

Denote by $\Mod \C$ the category of $\C$-modules.
It is well known that $\Mod \C$ is an abelian $k$-category.
Given any $\C$-modules $M$ and $N$, we denote by $\Homc(M,N)$ the set of morphisms of $\C$-modules.
We have the faithful exact contravariant functor
\(
  D \colon \Mod \C \to \Mod \C^\op
\)
induced by
\(
  \Hom_k(-,k) \colon \Mod k \to \Mod k
\).

We mention the following fact;
see \cite[Section~3.7]{GabrielRoiter1992Representations}.
It implies that $\C(a,-)$ is projective and $D \C(-,a)$ is injective for any $a \in \Ob \C$.

\begin{lemma}\label{lem:proj-inj}
  For any $M \in \Mod \C$ and $a \in \C$, there exist natural isomorphisms
  \(
    \Homc (\C(a,-), M) \cong M(a)
  \)
  and
  \(
    \Homc ( M, D \C(-,a) ) \cong D M(a)
  \).
  \qed
\end{lemma}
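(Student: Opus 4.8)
The plan is to recognize both isomorphisms as instances of the Yoneda lemma, the second one combined with the duality $D$. For the first, I would define the map $\Homc(\C(a,-), M) \to M(a)$ sending a morphism $\eta$ to $\eta_a(\1_a)$, where $\1_a \in \C(a,a)$ is the identity. Its inverse sends $m \in M(a)$ to the morphism of $\C$-modules whose component at $b$ is the map $\C(a,b) \to M(b)$, $\alpha \mapsto M(\alpha)(m)$. That these are mutually inverse, $k$-linear, and natural in both $M$ and $a$ is the standard computation behind the Yoneda lemma; the functoriality of $M$ and the naturality condition $N(\alpha) \circ f_a = f_b \circ M(\alpha)$ from the definition of a morphism of $\C$-modules are exactly what make the verification go through.

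For the second isomorphism, the key step is a natural identification
\[
  \Homc(M, DN) \simeq \Hom_{\C^\op}(N, DM)
\]
valid for any $\C$-module $M$ and $\C^\op$-module $N$. Both sides are in natural bijection with the collection of $k$-bilinear pairings $M(b) \times N(b) \to k$, one for each object $b$, that are compatible with the structure maps of $M$ and $N$; this follows by applying the canonical isomorphism $\Hom_k(M(b), \Hom_k(N(b),k)) \simeq \Hom_k(N(b), \Hom_k(M(b),k))$ componentwise and checking that the two naturality conditions coincide. No finiteness hypothesis is needed here: it is merely the symmetry of the two-variable functor $\Hom_k(- \otimes_k -, k)$.

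Finally I would specialize to $N = \C(-,a)$, which is precisely the representable $\C^\op$-module $\C^\op(a,-)$. The displayed identification then yields
\[
  \Homc(M, D\C(-,a))
  \simeq \Hom_{\C^\op}(\C^\op(a,-), DM)
  \simeq (DM)(a) = DM(a),
\]
where the middle isomorphism is the first part of the lemma applied inside $\Mod \C^\op$, and the last equality is the definition $DM = \Hom_k(M(-),k)$. I expect the main obstacle to be the bookkeeping in the symmetry step: one must track how the $\C$-action on $M$ and the $\C^\op$-action on $N$ translate, under $D$, into the two naturality conditions, and confirm that the transpose pairing satisfies the opposite compatibility. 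Once that is checked, both statements reduce to the Yoneda lemma together with the elementary linear-algebra duality, so no representation-theoretic input beyond the definitions is required.
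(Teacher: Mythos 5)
Your proposal is correct. The paper offers no proof of this lemma, merely citing Gabriel--Roiter, and your argument---the covariant Yoneda lemma for the first isomorphism, and the adjunction $\Homc(M,DN)\simeq\Hom_{\C^\op}(N,DM)$ combined with Yoneda in $\Mod\C^\op$ for the second---is exactly the standard verification the citation stands in for, so nothing further is needed.
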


Given a collection $\A$ of $\C$-modules, denote by $\add \A$ the full subcategory of $\Mod \C$ formed by direct summands of finite direct sums of objects in $\A$.
Set
\(
  \proj \C = \add \set{\C(a,-) \middle| a \in \Ob \C}
\)
and
\(
  \inj \C = \add \set{ D \C(-,a) \middle| a \in \Ob \C}
\).
We observe that the restriction of $D$ gives a duality
\(
  D \colon \proj \C \to \inj \C^\op
\).

A morphism $f \colon M \to N$ of $\C$-modules is called \emph{right minimal} if any endomorphism $g \in \Endc(M)$ with $f \circ g = f$ is an isomorphism. Dually, $f$ is called \emph{left minimal} if any endomorphism $h \in \Endc(N)$ with $h \circ f = f$ is an isomorphism.

Let $M$ be a $\C$-module. A right minimal epimorphism $P \to M$ with projective $P$ is called a \emph{projective cover} of $M$.
A left minimal monomorphism $M \to I$ with injective $I$ is called an \emph{injective envelope} of $M$.
It is well known that each $\C$-module admits an injective envelope;
see \cite[Theorem~3.10.10]{Popescu1973Abelian}.
Note that projective covers or injective envelopes may lie outside of $\proj \C$ or $\inj \C$.

We call $M$ \emph{finitely generated} if there exists an epimorphism $f \colon P \to M$ with $P \in \proj \C$; call $M$ \emph{finitely presented} if moreover $\Ker f$ is finitely generated.
We denote by $\fg \C$ the category of finitely generated $\C$-modules, and by $\fp \C$ the one of finitely presented $\C$-modules.


Dually, we call $M$ \emph{finitely cogenerated} if there exists a monomorphism $g \colon M \to I$ with $I \in \inj \C$; call $M$ \emph{finitely copresented} if moreover $\Cok g$ is finitely cogenerated.
We denote by $\fcg \C$ the category of finitely cogenerated $\C$-modules, and by $\fcp \C$ the one of finitely copresented $\C$-modules.


We observe that the restrictions of $D$ give dualities
\[
  D \colon \fg \C \To \fcg \C^\op
  \quad\mbox{and}\quad
  D \colon \fp \C \To \fcp \C^\op.
\]
It follows that each finitely generated $\C$-module $M$ admits a projective cover.
Indeed, since $D M$ is finitely cogenerated, we can assume $f \colon D M \to I$ is an injective envelope in $\Mod \C^\op$ with $I \in \inj \C^\op$.
Observe that both $D M(a)$ and $I(a)$ are finite dimensional for all $a \in \Ob \C$.
Then $D f \colon D I \to M$ is a projective cover.

\begin{lemma}\label{lem:Krull}
  The categories $\fg \C$ and $\fcg \C$ are Hom-finite Krull--Schmidt.
\end{lemma}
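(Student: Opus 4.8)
The plan is to show two things: first that $\fg\C$ is a Hom-finite category whose objects decompose into indecomposables with local endomorphism rings, and second that the same holds for $\fcg\C$, which by duality will follow immediately once the first part is established. Recall that a Hom-finite $k$-linear additive category is Krull--Schmidt precisely when every object has a semiperfect (equivalently, local-decomposing) endomorphism ring, and a standard sufficient condition is that idempotents split and every endomorphism ring is semiperfect. Since $\fg\C$ is already closed under direct summands (a summand of a finitely generated module is finitely generated), idempotents split automatically, so the crux is Hom-finiteness together with local endomorphism rings for indecomposables.

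First I would prove Hom-finiteness. Given $M,N\in\fg\C$, choose an epimorphism $\pi\colon P\to M$ with $P\in\proj\C$. Applying the left-exact functor $\Homc(-,N)$ gives an injection $\Homc(M,N)\hookrightarrow\Homc(P,N)$, so it suffices to bound $\Homc(P,N)$ when $P$ is a finite direct sum of representables $\C(a,-)$. By Lemma~\ref{lem:proj-inj}, $\Homc(\C(a,-),N)\simeq N(a)$, and since $N$ is finitely generated it is a quotient of a finite direct sum of representables, each evaluated at $a$ being finite dimensional by the Hom-finiteness of $\C$; hence $N(a)$ is finite dimensional and $\Homc(P,N)$ is finite dimensional over $k$. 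Therefore $\Homc(M,N)$ is finite dimensional, giving Hom-finiteness of $\fg\C$.

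Next I would upgrade Hom-finiteness to the Krull--Schmidt property. The standard argument is that in a Hom-finite $k$-linear category in which idempotents split, the endomorphism ring $\Endc(M)$ of any object is a finite dimensional $k$-algebra, hence semiperfect; an object is indecomposable exactly when this ring is local, and finite dimensionality guarantees that $M$ decomposes as a finite direct sum of indecomposables with local endomorphism rings. Uniqueness of such a decomposition is then the classical Krull--Schmidt--Azumaya theorem. The one point needing care is that idempotents split in $\fg\C$: an idempotent $e\in\Endc(M)$ yields a direct summand $\Im e$, which is a quotient of $M$ and simultaneously a submodule, and being a summand of a finitely generated module it is finitely generated, so it lies in $\fg\C$. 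Thus $\fg\C$ is Krull--Schmidt.

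Finally, the statement for $\fcg\C$ follows by duality. The functor $D\colon\fg\C\to\fcg\C^\op$ is an (anti-)equivalence as recorded just before the lemma, and the Krull--Schmidt property is self-dual: it is preserved under any duality since it is phrased entirely in terms of finite direct-sum decompositions and local endomorphism rings, both of which $D$ transports. Hence $\fcg\C$ inherits the Hom-finite Krull--Schmidt property from $\fg\C$. The main obstacle I anticipate is not any single deep step but rather verifying cleanly that $N(a)$ is finite dimensional from finite generation together with the Hom-finiteness of $\C$; once that dimension bound is in hand, the passage to the Krull--Schmidt conclusion is the routine finite-dimensional-algebra argument.
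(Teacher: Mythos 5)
Your proof is correct and follows essentially the same route as the paper: establish Hom-finiteness by embedding $\Homc(M,N)$ into $\Homc(P,N)$ for a projective presentation and using Lemma~\ref{lem:proj-inj}, then invoke the standard criterion (Krause's result) that a Hom-finite category with split idempotents is Krull--Schmidt. The only cosmetic difference is that the paper treats $\fcg\C$ by the dual direct argument (embedding into $\Homc(N,I)$ for an injective copresentation) rather than transporting the property along the duality $D\colon\fg\C\to\fcg\C^{\op}$, which is equally valid.
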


\begin{proof}
  Let $M,N \in \fg \C$.
  Then $\dim N(a)$ is finite for any $a \in \C$.
  Assume $P \to M$ is an epimorphism with $P \in \proj \C$.
  Then $\Homc(P, N)$ is finite dimensional, and so is $\Homc(M, N)$.
  Therefore, $\fg \C$ is Hom-finite.
  Moreover, it is closed under direct summands. In other words, it has split idempotents, and hence is Krull--Schmidt;
  see \cite[Corollary~4.4]{Krause2015Krull}.
  Similarly, $\fcg \C$ is also Hom-finite Krull--Schmidt.
\end{proof}

For each $\C$-module $M$, we denote by $M^*$ the $\C^\op$-module given by
\[
  \begin{array}{r@{\;}l@{\;}c@{\;}l}
    \C^\op & \To \Mod \C         & \xto{\Homc(M,-)} & \Mod k, \\
    a      & \longmapsto \C(a,-) & \longmapsto      & \Homc(M, \C(a,-)).
  \end{array}
\]
Here, the left arrow is the Yoneda embedding.
For each morphism $f \colon M \to N$ of $\C$-modules, we let
\(
  f^* \colon N^* \to M^*
\)
be the morphism of $\C^\op$-modules given by
\[
  f^*_a := \Homc (f, \C(a,-)) \colon N^* (a) \To M^* (a),
\]
for any $a \in \Ob \C$.
Then we obtain a contravariant functor
\[
  (-)^* \colon \Mod \C \To \Mod \C^\op.
\]
We mention that $(-)^*$ is left exact, since
\(
  \Homc(-, \C(a,-)) \colon \Mod \C \to \Mod k
\)
is left exact for any $a \in \Ob \C$.
We observe by Yoneda's lemma the duality
\[
  (-)^* \colon \proj \C \To \proj \C^\op.
\]

\subsection{Stable categories}

Let $\A$ be an abelian $k$-category.
Recall that a morphism $f \colon X \to Y$ in $\A$ is called \emph{projectively trivial} if for any $Z \in \Ob \A$, the induced map
\(
  \Ext_\A^1(f, Z) \colon \Ext_\A^1(Y, Z) \to \Ext_\A^1(X, Z)
\)
is the zero map;
see \cite[Section~2]{LenzingZuazua2004Auslander}.
We mention that $f$ is projectively trivial if and only if it factors through every epimorphism $f' \colon X' \to Y$.
Dually, $f$ is called \emph{injectively trivial} if for any $Z \in \Ob \A$, the induced map
\(
  \Ext_\A^1(Z, f) \colon \Ext_\A^1(Z, X) \to \Ext_\A^1(Z, Y)
\)
is the zero map.
The morphism $f$ is injectively trivial if and only if it factors through every monomorphism $f' \colon X \to Y'$.

We mention the following observation;
see \cite[Lemma~2.2]{LenzingZuazua2004Auslander} and its dual.

\begin{lemma}\label{lem:trivial}
  Let $f \colon X \to Y$ be a morphism in $\A$.
  \begin{enumerate}
    \item
      If there exists an epimorphism $g \colon P \to Y$ with projective $P$, then $f$ is projectively trivial if and only if it factors through $g$.
    \item
      If there exists a monomorphism $g \colon X \to I$ with injective $I$, then $f$ is injectively trivial if and only if it factors through $g$.
    \qed
  \end{enumerate}
\end{lemma}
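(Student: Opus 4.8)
The plan is to prove statement (1) and then obtain (2) for free by applying (1) in the opposite category $\A^\op$, where projectives become injectives and epimorphisms become monomorphisms. For (1) I would treat the two implications separately, reducing projective triviality to the vanishing of a single canonical extension class.

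First I would dispatch the easier implication: that factoring through $g$ forces projective triviality. If $f = g \circ h$ for some $h \colon X \to P$, then contravariant functoriality of $\Ext_\A^1(-, Z)$ in its first variable gives $\Ext_\A^1(f, Z) = \Ext_\A^1(h, Z) \circ \Ext_\A^1(g, Z)$, which factors through $\Ext_\A^1(P, Z) = 0$ since $P$ is projective. Hence $\Ext_\A^1(f, Z)$ is zero for every $Z$, so $f$ is projectively trivial. This is the only step that uses projectivity of $P$.

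For the converse, I would set $K = \Ker g$ and consider the short exact sequence $0 \to K \to P \xrightarrow{g} Y \to 0$, whose class I denote $\xi \in \Ext_\A^1(Y, K)$. Projective triviality of $f$ says precisely that $\Ext_\A^1(f, K)(\xi) = 0$. The next step is to identify $\Ext_\A^1(f, K)(\xi)$ with the pullback extension $0 \to K \to E \to X \to 0$ of $\xi$ along $f$, where $E = P \times_Y X$; its vanishing means this sequence splits. I would then choose a section of $E \to X$, compose it with the projection $E \to P$ to obtain a morphism $h \colon X \to P$, and read off $g \circ h = f$ from the commutativity of the pullback square.

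The main obstacle is the diagram bookkeeping in this last step: checking that $\Ext_\A^1(f, K)(\xi)$ is genuinely represented by the pullback of $\xi$ along $f$, and that a splitting of the pullback sequence yields the factorization $f = g \circ h$. It is worth noting that this argument never uses projectivity of $P$, so it in fact proves the stronger characterization recorded before the lemma, namely that $f$ is projectively trivial if and only if it factors through every epimorphism onto $Y$; the projectivity of $P$ is needed only for the easy implication.
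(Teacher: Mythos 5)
Your argument is correct and is exactly the standard pullback argument behind the cited result (the paper itself gives no proof, deferring to Lenzing--Zuazua, Lemma~2.2, whose proof this reproduces): the easy direction via $\Ext_\A^1(P,Z)=0$, the converse by splitting the pullback of $0 \to \Ker g \to P \xrightarrow{g} Y \to 0$ along $f$, and (2) by passing to $\A^\op$. Your closing observation that the converse never uses projectivity of $P$, and hence yields the stronger characterization stated just before the lemma, is also accurate.
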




Let $X, Y \in \Ob \A$. We denote by $\P(X, Y)$ the $k$-submodule of $\A(X, Y)$ formed by projectively trivial morphisms. Then $\P$ forms an ideal of $\A$. The \emph{projectively stable category} $\underline\A$ attached to $\A$ is the factor category $\A / \P$. Given a morphism $f \in \A(X, Y)$, we denote by $\underline{f}$ its image in $\underline\A$.

Dually, we denote by $\I(X, Y)$ the $k$-submodule of $\A(X, Y)$ formed by injectively trivial morphisms. The \emph{injectively stable category} $\overline\A$ attached to $\A$ is the factor category $\A / \I$. Given a morphism $f \in \A(X, Y)$, we denote by $\overline{f}$ its image in $\overline\A$.

We mention that $\Ext_\A^1(-,X)$ induces a functor
\(
  \Ext_\A^1(-,X) \colon \underline\A \to \Mod k
\),
and $\Ext_\A^1(X,-)$ induces a functor
\(
  \Ext_\A^1(X,-) \colon \overline\A \to \Mod k
\),
for any $X \in \Ob \A$.

Specially, we can consider the stable categories of $\Mod \C$.
Since $\Mod \C$ contains enough projective modules, a morphism is projectively trivial if and only if it factors through some projective module by Lemma~\ref{lem:trivial}. Similarly, a morphism is injectively trivial if and only if it factors through some injective module.

We denote by $\Modu \C$ the projectively stable category, and by $\Modo \C$ the injectively stable category.
For any $\C$-modules $M$ and $N$, we denote
\(
  \Homuc(M, N) = \Homc(M, N) / \P(M, N)
\)
and
\(
  \Homoc(M, N) = \Homc(M, N) / \I(M, N)
\).


\subsection{Auslander--Reiten formula}

Let
\(
  \delta \colon 0 \to X \to Y \to Z \to 0
\)
be an exact sequence of $\C$-modules.
The \emph{covariant defect} $\delta_*$ and the \emph{contravariant defect} $\delta^*$ are given by the following exact sequence of functors
\begin{gather*}
  0 \to \Homc(Z, -) \to \Homc(Y, -) \to \Homc(X, -) \to \delta_* \to 0,
  \\
  0 \to \Homc(-, X) \to \Homc(-, Y) \to \Homc(-, Z) \to \delta^* \to 0.
\end{gather*}
We mention that $\delta_*$ vanishes on injectively trivial morphisms, and $\delta^*$ vanishes on projectively trivial morphisms. Therefore, they induce the functors
\[
  \delta_* \colon \Modo \C \To \Mod k
  \quad \mbox{and} \quad
  \delta^* \colon \Modu \C \To \Mod k.
\]

For each finitely presented $\C$-module $M$, we fix some exact sequence
\[
  P_1(M) \overset{f_1}{\To} P_0(M) \overset{f_0}{\To} M \To 0.
\]
Here, $f_0$ and $P_0(M) \to \Im f_1$ are projective covers.
We call $\Cok f_1^*$ the \emph{transpose} of $M$, and denote by $\Tr M$;
see \cite[Section~2]{AuslanderReiten1975Representation3}.
Moreover, we have a duality
\[
  \Tr \colon \fpu \C \To \fpu \C^\op,
\]
Here, $\fpu \C$ is the full subcategory of $\Modu \C$ formed by finitely presented $\C$-modules.

We mention that if $M$ is an indecomposable non-projective finitely presented $\C$-module, then $\Tr M$ is an indecomposable non-projective $\C^\op$-module, and $\Tr \Tr M \cong M$;
see \cite[Propositon~IV.1.7]{AuslanderReitenSmalo1995Representation}.

%
%
%

We have the Auslander's defect formula;
see \cite[Theorem]{Krause2003short}.

\begin{lemma}\label{lem:defect}
  Let
  \(
    \delta \colon 0 \to X \xto{f} Y \xto{g} Z \to 0
  \)
  be an exact sequence in $\Mod \C$, and $M \in \fp \C$.
  Then there exists a natural isomorphism
  \(
    \delta_*( D \Tr M ) \cong D \delta^*(M)
  \).
  \qed
\end{lemma}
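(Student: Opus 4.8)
The plan is to reduce the identity to a single application of the snake lemma, once we have a duality that rewrites $\Homc(-, D\Tr M)$ as the $k$-dual of a cokernel built from the fixed projective presentation $P_1 \xto{f_1} P_0 \to M \to 0$ of $M$. The crucial preliminary is the natural isomorphism, valid for every $\C$-module $W$ and every projective $P = \C(a,-)$,
\[
  \Homc(W, D(P^*)) \cong D W(a) \cong D\Homc(P, W),
\]
which follows from the two isomorphisms of Lemma~\ref{lem:proj-inj} (using $P^* = \C(-,a)$) and extends additively to all $P \in \proj\C$. Applying the exact functor $D$ to the defining presentation $P_0^* \xto{f_1^*} P_1^* \to \Tr M \to 0$ exhibits $D\Tr M$ as the kernel of the map $D(f_1^*)\colon D(P_1^*) \to D(P_0^*)$ between injectives; feeding $W$ into the left exact functor $\Homc(W,-)$ and substituting the isomorphism above yields, naturally in $W$,
\[
  \Homc(W, D\Tr M) \cong D\Cok\bigl( \Homc(f_1, W) \colon \Homc(P_0, W) \To \Homc(P_1, W) \bigr).
\]
I expect this identification to be the main obstacle: one must verify that $D(f_1^*)$ corresponds to $D\Homc(f_1, W)$ under the duality, and that the whole assignment is natural in $W$, since naturality in $X,Y,Z$ is what drives the final comparison.

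With this in hand I would apply $\Homc(P_0,-)$ and $\Homc(P_1,-)$ to $\delta$, obtaining a map of short exact sequences (both rows are exact because $\Extc(P_i,-)=0$ for $P_i$ projective), with vertical maps induced by $f_1$. The snake lemma then produces a six-term exact sequence whose three kernels are $\Homc(M,X)$, $\Homc(M,Y)$, $\Homc(M,Z)$ (read off from the presentation of $M$) and whose three cokernels are $\Cok_W := \Cok(\Homc(f_1,W))$ for $W = X,Y,Z$. Tracking the connecting morphism identifies $\delta^*(M) = \Cok(\Homc(M,Y)\to\Homc(M,Z))$ with the kernel of $\Cok_X \to \Cok_Y$, leaving the four-term exact sequence
\[
  0 \To \delta^*(M) \To \Cok_X \To \Cok_Y \To \Cok_Z \To 0.
\]

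Finally I would apply the exact functor $D$ and substitute the key isomorphism $D\Cok_W \cong \Homc(W, D\Tr M)$, obtaining
\[
  0 \To \Homc(Z, D\Tr M) \To \Homc(Y, D\Tr M)
    \To \Homc(X, D\Tr M) \To D\delta^*(M) \To 0.
\]
Comparing this with the defining exact sequence of the covariant defect evaluated at $D\Tr M$, namely $0 \to \Homc(Z, D\Tr M) \to \Homc(Y, D\Tr M) \to \Homc(X, D\Tr M) \to \delta_*(D\Tr M) \to 0$, the two sequences share the same first three terms and the same maps, which forces $\delta_*(D\Tr M) \cong D\delta^*(M)$; naturality is inherited from each step of the construction. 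This recovers the cited result of Krause.
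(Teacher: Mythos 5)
Your argument is correct: the Nakayama-type identification $\Homc(W, D(P^*)) \cong D\Homc(P,W)$, the snake lemma applied to $\Homc(P_i,\delta)$, and the comparison of the resulting four-term sequence with the defining sequence of $\delta_*$ together give exactly the defect formula, and you have rightly flagged that naturality in $W$ is what makes the final comparison of maps legitimate. The paper offers no proof of its own here, deferring to Krause's \emph{A short proof for Auslander's defect formula}, and your proposal is a faithful reconstruction of that cited argument.
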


As a consequence, the Auslander--Reiten formula follows;
compare \cite[Proposition~3.1]{AuslanderReiten1975Representation3}
and \cite[Corollaries]{Krause2003short}.

\begin{proposition}\label{prop:AR}
  Let $N$ be a $\C$-module and $M$ be a finitely presented $\C$-module. Then there exist natural isomorphisms
  \[
    \Extc ( N, D \Tr M ) \cong D \Homuc (M, N)
  \]
  and
  \[
    \Homoc ( N, D \Tr M ) \cong D \Extc (M, N).
  \]
\end{proposition}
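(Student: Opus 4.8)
The plan is to derive the Auslander--Reiten formula directly from Auslander's defect formula (Lemma~\ref{lem:defect}) by applying it to cleverly chosen short exact sequences. The two isomorphisms are dual to each other, so I would focus on establishing the first one, namely $\Extc(N, D\Tr M) \simeq D\Homuc(M,N)$, and then indicate that the second follows by the dual argument (or by applying $D$ and using $\Tr\Tr M \simeq M$).

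First I would fix a finitely presented $\C$-module $M$ together with its chosen presentation $P_1(M) \xrightarrow{f_1} P_0(M) \xrightarrow{f_0} M \to 0$. To produce a short exact sequence to which Lemma~\ref{lem:defect} applies, I would consider, for an arbitrary $\C$-module $N$, a short exact sequence $\delta\colon 0 \to N \to I \to C \to 0$ obtained by embedding $N$ into an injective module $I$ (which exists since $\Mod\C$ has enough injectives, via the injective envelopes mentioned after Lemma~\ref{lem:proj-inj}). The key computation is to identify the defects $\delta_*(D\Tr M)$ and $\delta^*(M)$ for this particular sequence. Since $I$ is injective, $\Extc(M, I) = 0$, so the contravariant defect sequence $0 \to \Homc(M,N) \to \Homc(M,I) \to \Homc(M,C) \to \delta^*(M) \to 0$ combined with the long exact $\Ext$-sequence lets me identify $\delta^*(M)$ with a cokernel whose $D$-dual I can control. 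The main point is that $\delta^*(M)$, viewed through the defect, should compute $\Homuc(M,N)$ up to the relevant identifications, because the image of $\Homc(M,I)$ inside $\Homc(M,C)$ corresponds precisely to maps factoring through the injective $I$.

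The cleaner route, which I would actually pursue, is to invoke the defect formula with the roles arranged so that $\delta_*(D\Tr M) = \Extc(N, D\Tr M)$ directly. Concretely, the covariant defect $\delta_*$ evaluated at $D\Tr M$ sits in the sequence $0 \to \Homc(C, D\Tr M) \to \Homc(I, D\Tr M) \to \Homc(N, D\Tr M) \to \delta_*(D\Tr M) \to 0$, and because $I$ is injective the connecting maps in the associated long exact sequence force $\delta_*(D\Tr M) \simeq \Extc(N, D\Tr M)$. Meanwhile, $\delta^*(M)$ must be identified with $\Homuc(M,N)$: here I would use that $\Homc(M,-)$ applied to $\delta$ gives $\Homuc(M,N)$ as the kernel of the relevant map, exactly because maps $M \to N$ that lift to $M \to I$ are those factoring through an injective, i.e.\ the injectively trivial ones—but one must check this lands in the projectively stable quotient, which is where the asymmetry between $\Homuc$ and $\Homoc$ in the two formulas originates. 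Lemma~\ref{lem:defect} then yields $\Extc(N, D\Tr M) \simeq \delta_*(D\Tr M) \simeq D\delta^*(M) \simeq D\Homuc(M,N)$, and naturality in $N$ is inherited from the naturality of the defect formula.

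The hard part will be the bookkeeping in the second identification: verifying that the defect $\delta^*(M)$ is naturally isomorphic to $\Homuc(M,N)$ rather than to $\Homc(M,N)$ or $\Homoc(M,N)$, and confirming that this identification is natural in $N$ and independent of the chosen injective embedding $\delta$. This requires carefully tracking which morphisms become trivialized—projectively trivial maps are exactly those factoring through projectives (by Lemma~\ref{lem:trivial} and enough projectives in $\Mod\C$), and I must match these against the image appearing in the defect exact sequence. I expect the cleanest formulation to come from choosing $\delta$ built from the presentation of $M$ on one side and an injective copresentation on the other, so that both defects are computed simultaneously; reconciling the two choices to get a single natural isomorphism is the delicate step, and it is essentially the content that distinguishes this statement from the raw defect formula.
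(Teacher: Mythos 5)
Your overall strategy---apply Auslander's defect formula (Lemma~\ref{lem:defect}) to a well-chosen short exact sequence and identify the two defects---is exactly the paper's, but the concrete computation you propose for the first isomorphism is wrong, and the error is not mere bookkeeping. For $\delta \colon 0 \to N \to I \to C \to 0$ with $I$ injective, the covariant defect is the cokernel of $\Homc(I, X) \to \Homc(N, X)$; the long exact sequence continues into $\Extc(C, X) \to \Extc(I, X)$, and injectivity of $I$ kills $\Extc(-, I)$, not $\Extc(I, -)$, so nothing forces this cokernel to be $\Extc(N, X)$. What it actually equals, by Lemma~\ref{lem:trivial}(2), is $\Homoc(N, X)$: the maps out of $N$ modulo those extending over the monomorphism into $I$. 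Likewise $\delta^*(M) = \Cok\bigl(\Homc(M, I) \to \Homc(M, C)\bigr) \simeq \Extc(M, N)$, precisely because $\Extc(M, I) = 0$; it is not $\Homuc(M, N)$. So the sequence you chose proves the \emph{second} isomorphism $\Homoc(N, D\Tr M) \simeq D\Extc(M, N)$, not the first.

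To get the first isomorphism you must instead take a projective presentation $\delta \colon 0 \to K \to P \to N \to 0$ with $P$ projective (this is what the paper does). Then $\Extc(P, -) = 0$ gives $\delta_*(X) \simeq \Extc(N, X)$, and $\delta^*(M)$ is $\Homc(M, N)$ modulo the maps factoring through the epimorphism $P \to N$, which is $\Homuc(M, N)$ by Lemma~\ref{lem:trivial}(1). Your closing suggestion of ``choosing $\delta$ built from the presentation of $M$ on one side and an injective copresentation on the other'' does not parse: both defects in the formula are computed from one and the same sequence $\delta$, and the asymmetry between $\Homuc$ and $\Homoc$ in the two statements comes entirely from whether $\delta$ is a projective presentation or an injective copresentation of $N$. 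Once the correct sequences are chosen, the remaining points (naturality in $N$, independence of the choice) are immediate from the naturality in Lemma~\ref{lem:defect}, as you say.
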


\begin{proof}
  Let
  \(
    \delta \colon 0 \to K \to P \to N \to 0
  \)
  be an exact sequence with projective $P$.
  We observe that
  \(
    \delta_* ( D \Tr M ) = \Extc ( N, D \Tr M )
  \)
  and
  \(
    \delta^*(M) = \Homuc (M, N)
  \).
  Then Lemma~\ref{lem:defect} gives the first isomorphism.

  Let
  \(
    \delta \colon 0 \to N \to I \to K \to 0
  \)
  be an exact sequence with injective $I$. We observe that
  \(
    \delta_*( D \Tr M ) = \Homoc ( N, D \Tr M )
  \)
  and
  \(
    \delta^* (M) = \Extc (M, N)
  \).
  Then Lemma~\ref{lem:defect} gives the second isomorphism.
\end{proof}

The following result is useful in characterizing whether a morphism is projectively trivial or injectively trivial.

\begin{proposition}\label{prop:trivial}
  Let $f \colon M \to N$ be a morphism of $\C$-modules.
  \begin{enumerate}
    \item
      Assume $M$ is finitely presented. Then $f$ is projectively trivial in $\Mod \C$ if and only if
      \(
        \Extc (f, D \Tr M) = 0
      \).
    \item
      Assume $N$ is finitely copresented. Then $f$ is injectively trivial in $\Mod \C$ if and only if
      \(
        \Extc (\Tr D N, f) = 0
      \).
  \end{enumerate}
\end{proposition}

\begin{proof}
  We only prove (1).
  It is sufficient to show the sufficiency.
  Proposition~\ref{prop:AR} implies the commutative diagram
  \[\begin{tikzcd}[column sep = 7em]
    \Extc (N, D \Tr M) & \Extc (M, D \Tr M) \\
    D \Homuc (M, N)    & D \Homuc (M, M).
    \ar[from = 1-1, to = 2-1, "\cong"]
    \ar[from = 1-2, to = 2-2, "\cong"]
    \ar[from = 1-1, to = 1-2, "{\Extc(f, D\Tr M)}"]
    \ar[from = 2-1, to = 2-2, "{D \Homuc (M, \underline{f})}"]
  \end{tikzcd}\]
  Then
  \(
    \Extc (f, D \Tr M) = 0
  \)
  implies
  \(
    D \Homuc (M, \underline{f}) = 0
  \).
  Moreover,
  \(
    \Homuc (M, \underline{f}) = 0
  \)
  since $D$ is faithful.
  In particular,
  \(
    \underline{f} = \Homuc (M, \underline{f}) (\underline{\1_M}) = 0
  \)
  in $\Homuc (M, N)$.
  In other words, $f$ is projectively trivial in $\Mod \C$.
\end{proof}

\subsection{Almost split sequences}

Recall that a morphism $f \colon M \to N$ is called \emph{right almost split} if it is a non-retraction and each non-retraction $g \colon M' \to N$ factors through $f$.
Dually, $f$ is called \emph{left almost split} if it is a non-section and each non-section $g' \colon M \to N'$ factors through $f$.
An exact sequence
\(
  0 \to X \xto{g} Y \xto{f} Z \to 0
\)
is called \emph{almost split} if $f$ is right almost split and $g$ is left almost split.


We deduce the existence of almost split sequences;
compare \cite[Theorem~V.1.15]{AuslanderReitenSmalo1995Representation}.

\begin{proposition}\label{prop:ass}
  Let $M$ be an indecomposable $\C$-module.
  \begin{enumerate}
    \item
      If $M$ is finitely presented non-projective, then there exists an almost split sequence
      \[
        0 \To D \Tr M \To E \To M \To 0.
      \]
    \item
      If $M$ is finitely copresented non-injective, then there exists an almost split sequence
      \[
        0 \To M \To E \To \Tr D M \To 0.
      \]
  \end{enumerate}
\end{proposition}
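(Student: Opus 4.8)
The plan is to prove part~(1) using the Auslander--Reiten formula from Proposition~\ref{prop:AR} together with the characterization in Corollary~\ref{cor:trivial}; part~(2) then follows by applying the duality $D$. So let $M$ be indecomposable finitely presented non-projective. Then $\Tr M$ is indecomposable non-projective, so $D\Tr M$ is indecomposable in $\Mod\C^{\op}$, and hence $D\Tr M$ is an indecomposable $\C$-module with local endomorphism ring. The idea is to produce the almost split sequence by choosing a suitable element of $\Extc(M, D\Tr M)$, namely one that generates a simple submodule as a module over the (local) endomorphism ring $\Endc(M)$.

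First I would set $\Gamma = \Endc(M)$ and $\Lambda = \Endc(D\Tr M)$, both local rings since $M$ and $D\Tr M$ are indecomposable in Hom-finite Krull--Schmidt categories (Lemma~\ref{lem:Krull}). Proposition~\ref{prop:AR} gives a natural isomorphism $\Extc(M, D\Tr M) \simeq D\Homuc(M,M) = D\,\underline\End_\C(M)$. Since $M$ is non-projective, $\underline\End_\C(M)$ is a nonzero local ring, so its $k$-dual has a simple socle as a $\Gamma$-$\Gamma$-bimodule; choosing a generator of this socle yields a distinguished short exact sequence $\delta\colon 0 \to D\Tr M \xto{g} E \xto{f} M \to 0$. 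This $\delta$ is my candidate for the almost split sequence.

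Next I would verify the two almost split conditions. For right almost splitness of $f$: a morphism $h\colon M' \to M$ fails to factor through $f$ precisely when the connecting map sends $\delta$ to a nonzero element of $\Extc(M', D\Tr M)$, i.e. when $\Extc(h, D\Tr M)(\delta) \neq 0$; I would show that under the socle choice this happens exactly when $h$ is a retraction, using the naturality square and Corollary~\ref{cor:trivial}(1), which equates $\Extc(h, D\Tr M) = 0$ with $h$ being projectively trivial, hence (for a map into the non-projective indecomposable $M$) with $h$ failing to be a retraction. Dually, left almost splitness of $g$ follows from Corollary~\ref{cor:trivial}(2): a map $g'\colon D\Tr M \to N'$ fails to be a section iff $\Extc(M, g')$ annihilates $\delta$, which the socle condition translates into $g'$ factoring through $g$. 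Assembling these gives that $\delta$ is almost split.

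The main obstacle will be the socle argument: I must check that the $\Gamma$-bimodule $D\,\underline\End_\C(M)$ really has a one-dimensional (simple) socle on the appropriate side and that a generator of it produces a sequence satisfying \emph{both} the left and right conditions simultaneously, rather than just one. This is exactly the point where local Krull--Schmidt structure is essential, and where the classical argument of \cite[Theorem~V.1.15]{AuslanderReitenSmalo1995Representation} must be adapted; the only genuine generalization needed here is that $M$ is merely finitely presented (not finitely generated over an Artin algebra), so I would take care that Proposition~\ref{prop:AR} and Corollary~\ref{cor:trivial} are invoked only for finitely presented $M$, where they are available, and that $D\Tr M$ being finitely copresented suffices to run the dual statements in $\Mod\C^{\op}$.
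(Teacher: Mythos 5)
Your setup matches the paper's: both choose a nonzero $u \in D\Enduc(M)$ vanishing on $\rad\Enduc(M)$ (your ``socle generator'' is exactly this) and take $\epsilon$ to be its preimage under the isomorphism $\Extc(M, D\Tr M) \simeq D\Enduc(M)$ of Proposition~\ref{prop:AR}. But the verification you sketch has a genuine gap. For right almost splitness you need that for every non-retraction $h \colon X \to M$ the specific element $\epsilon$ is killed by the connecting map, i.e.\ $\Extc(h, D\Tr M)(\epsilon) = 0$. You propose to get this from Corollary~\ref{cor:trivial}(1), but that corollary characterizes when the \emph{entire} map $\Extc(h, D\Tr X)$ vanishes, it refers to $D\Tr$ of the \emph{source} of $h$ rather than $D\Tr M$, and it requires that source to be finitely presented --- whereas the test object $X$ ranges over all of $\Mod\C$. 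Worse, the equivalence you assert (``non-retraction iff projectively trivial'') is false: a nonzero non-invertible endomorphism of $M$ is a non-retraction but is generally not projectively trivial. The correct step, which is what the paper does, is elementary: since $\Endc(M)$ is local and $h$ is a non-retraction, $h \circ h' \in \rad\Endc(M)$ for every $h' \colon M \to X$, so $u \circ \Homuc(M,h) = 0$ because $u$ kills the radical; the naturality square then gives $\Extc(h, D\Tr M)(\epsilon) = 0$, hence the pullback splits and $h$ factors through $f$.

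Your treatment of left almost splitness has the same defect with the variance reversed: Corollary~\ref{cor:trivial}(2) concerns morphisms $N \to D\Tr M$ \emph{into} $D\Tr M$, not morphisms $g' \colon D\Tr M \to N'$ out of it, so it cannot ``translate the socle condition into $g'$ factoring through $g$'' as you claim. The paper sidesteps this entirely: once $f$ is shown to be right almost split, the fact that $D\Tr M$ is indecomposable with local endomorphism ring lets one invoke \cite[Proposition~I.4.4]{Auslander1978Functors} to conclude that the non-split sequence $\epsilon$ is automatically almost split. You would be well advised to take that route rather than attempt a direct dual verification. Your reduction of part~(2) to part~(1) via the duality $D$ is fine.
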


\begin{proof}
  We only prove (1).
  One can choose some nonzero $\theta \in D \Enduc(M)$ vanishing on $\rad \Enduc(M)$.
  Proposition~\ref{prop:AR} implies that
  \(
    \Extc ( M, D \Tr M ) \cong D \Enduc(M)
  \).
  Assume the pre-image of $\theta$ under the isomorphism is the non-split exact sequence
  \[
      \delta \colon 0 \To D \Tr M \To E \overset{f}{\To} M \To 0.
  \]

  Claim: $f$ is right almost split.
  Indeed, it is a non-retraction since $\delta$ is non-split.
  Assume $h \colon X \to M$ is a non-retraction.
  Consider the induced map
  \[
      D \Homuc (M, \underline{h}) \colon
      D \Enduc(M) \To D \Homuc(M,X).
  \]
  Observe that $\Endc(M)$ is local. Then
  \(
    h \circ h' \in \rad \Endc(M)
  \)
  for any $h' \colon M \to X$.
  Since $\theta$ vanishing on $\rad \Enduc(M)$,
  it follows that
  \[
    D \Homuc (M, \underline{h}) (\theta) (\underline{h'})
    = (\theta \circ \Homuc(M, \underline{h})) (\underline{h'})
    = \theta (\underline{h} \circ \underline{h'}) = 0.
  \]
  Hence, 
  \(
    D \Homuc (M, \underline{h}) (\theta) = 0
  \).
  Consider the commutative diagram
  \[\begin{tikzcd}[column sep = 7em]
    \Extc ( M, D \Tr M ) & \Extc ( X, D \Tr M ) \\
    D \Enduc (M)         & D \Homuc (M,X).
    \ar[from = 1-1, to = 2-1, "\cong"]
    \ar[from = 1-2, to = 2-2, "\cong"]
    \ar[from = 1-1, to = 1-2, "{\Extc ( h, D \Tr M )}"]
    \ar[from = 2-1, to = 2-2, "{D \Homuc (M, \underline{h})}"]
  \end{tikzcd}\]
  We have that
  \(
    \Extc ( h, D \Tr M ) (\delta) = 0
  \).
  That is to say, the pullback of $\delta$ along $h$ splits.
  In other words, $h$ factors through $f$. It follows that $f$ is right almost split.

  Observe that $D \Tr M$ is indecomposable, and hence $\Endc(D \Tr M)$ is local.
  It follows that $\delta$ is an almost split sequence;
  see \cite[Proposition~I.4.4]{Auslander1978Functors}.
%
\end{proof}

\section{Generalized Auslander--Reiten duality on \texorpdfstring{$\fp \C$}{fpC}}

Let $k$ be a field.
We call a $k$-category $\C$ of type $A_\infty$ if $\Ob \C = \N$ with $\C(j,i) = 0$ for any $i<j$; compare \cite[Definition~2.2]{GanLi2015Noetherian}.
Recall that $\Mod \C$ is \emph{locally Noetherian} if $\C$-submodules of finitely generated $\C$-modules are also finitely generated; see \cite[Section~5.8]{Popescu1973Abelian}.

In this section, we assume $\C$ is a Hom-finite $k$-category of type $A_\infty$ such that $\Mod \C$ is locally Noetherian.
In particular, $\C$ is small and skeletal.

\subsection{Finitely presented modules}


We begin with the following well-known fact.

\begin{lemma}\label{lem:fg=fp}
  If $\Mod \C$ is locally Noetherian, then $\fg \C$ coincides with $\fp \C$, and is an abelian subcategory of $\Mod \C$ closed under extensions.
\end{lemma}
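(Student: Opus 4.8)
The plan is to first establish the equality $\fg \C = \fp \C$, and then use it together with the locally Noetherian hypothesis to verify the closure properties defining an abelian subcategory closed under extensions. The inclusion $\fp \C \subseteq \fg \C$ is immediate from the definitions. For the reverse inclusion, I would take $M \in \fg \C$ and choose an epimorphism $f \colon P \to M$ with $P \in \proj \C$. Since $P$ is finitely generated and $\Mod \C$ is locally Noetherian, the submodule $\Ker f$ of $P$ is finitely generated, whence $M$ is finitely presented.

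For the abelian subcategory structure, I would verify that $\fg \C$ is closed under finite direct sums, cokernels, and kernels computed in $\Mod \C$. Finite direct sums and cokernels are routine: a quotient of a finitely generated module is finitely generated by composing with the epimorphism from a projective, and the cokernel of a morphism in $\fg \C$ is such a quotient. The kernel of a morphism $f \colon M \to N$ in $\fg \C$ is a submodule of the finitely generated module $M$, hence finitely generated by the locally Noetherian hypothesis. This exhibits $\fg \C$ as a full subcategory closed under the formation of kernels and cokernels, so it is abelian and the inclusion functor is exact.

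For closure under extensions, given a short exact sequence $0 \to M' \to M \to M'' \to 0$ with $M', M'' \in \fg \C$, I would choose epimorphisms $P' \to M'$ and $P'' \to M''$ with $P', P'' \in \proj \C$. Projectivity of $P''$ lets me lift $P'' \to M''$ along $M \to M''$, and combining this lift with the composite $P' \to M' \to M$ yields a morphism $P' \oplus P'' \to M$ which is an epimorphism by a diagram chase (or the snake lemma). Hence $M$ is finitely generated.

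The hypothesis of local Noetherianity enters in exactly one place, namely guaranteeing that submodules of finitely generated modules remain finitely generated, and this is the crux of the whole statement: it is simultaneously what forces $\fg \C = \fp \C$ and what makes $\fg \C$ closed under kernels and hence abelian. The remaining verifications are formal and require no further input beyond the existence of projective covers recorded earlier in this section.
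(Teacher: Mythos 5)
Your proposal is correct and follows essentially the same route as the paper: local Noetherianity gives $\fg \C = \fp \C$ via finite generation of kernels of presentations, closure under submodules and quotients yields the abelian subcategory structure, and the standard lifting argument with projectives handles extensions. The paper's own proof is just a terser version of the same argument, leaving the extension step and the kernel/cokernel verifications implicit.
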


\begin{proof}
  Since $\Mod \C$ is locally Noetherian, every finitely generated $\C$-module is finitely presented. Then $\fg \C$ and $\fp \C$ coincide. We observe that $\fg \C$ is closed under submodules and factor modules. It follows that $\fg \C$ is an abelian subcategory of $\Mod \C$ closed under extensions.
\end{proof}


Recall that a $\C$-module $M$ is called \emph{finite dimensional} if there exist only finitely many $i \in \Ob \C$ with $M(i) \neq 0$ and these $M(i)$ are both finite dimensional. We denote by $\fd \C$ the category of finite dimensional $\C$-modules.

We mention the following observation.

\begin{lemma}\label{lem:inj-fd}
  Finitely cogenerated injective $\C$-modules are finite dimensional.
\end{lemma}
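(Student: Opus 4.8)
The plan is to reduce the statement to a direct dimension count on the representable injectives $D\C(-,a)$. First I would upgrade "finitely cogenerated injective" to membership in $\inj \C$. By definition, if $I$ is finitely cogenerated there is a monomorphism $g \colon I \to J$ with $J \in \inj \C$; since $I$ is injective, $\1_I$ extends along $g$, so $g$ is a section and $I$ is a direct summand of $J$. Hence $I \in \inj \C = \add \set{D\C(-,a) \middle| a \in \Ob \C}$, and it suffices to prove that every object of $\inj \C$ is finite dimensional.

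Next I would exploit that the class of finite dimensional modules is closed under finite direct sums (supports unite, values add) and under direct summands (supports and values only shrink). This reduces the claim to the single generators $D\C(-,a)$, which send $b \in \Ob \C$ to $D\C(b,a)$. Two hypotheses now enter. Hom-finiteness makes each $\C(b,a)$, and therefore each $D\C(b,a)$, finite dimensional, settling the second requirement in the definition of finite dimensionality. For the first requirement I would invoke the type $A_\infty$ condition $\C(j,i) = 0$ for $i < j$, which forces $\C(b,a) = 0$ whenever $a < b$; thus the support of $D\C(-,a)$ lies in the finite set $\set{b \in \N \middle| b \le a}$. Combining these, $D\C(-,a)$ has finitely many nonzero evaluations, each finite dimensional, so it is finite dimensional, and the reduction of the previous step completes the argument.

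This is essentially a verification rather than an argument with a genuine obstacle, so no step is hard in a technical sense. The one point needing care is conceptual: finite dimensionality must be checked against \emph{both} clauses of the definition, and the only clause not automatic from Hom-finiteness is the finiteness of the support. It is precisely the type $A_\infty$ hypothesis (the vanishing of $\C$ below the diagonal) that controls this support, so that hypothesis is exactly what makes the lemma true; the initial splitting step is what converts the a priori weaker notion of a finitely cogenerated injective into an object of $\inj \C$, after which the computation with representables is immediate.
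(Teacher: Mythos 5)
Your proof is correct and follows the same route as the paper: reduce to the representable injectives $D\C(-,a)$ and observe that the type $A_\infty$ condition bounds the support while Hom-finiteness bounds each value. The only difference is that you spell out the splitting and closure-under-summands reduction that the paper compresses into ``it is sufficient to show''.
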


\begin{proof}
  It is sufficient to show that $D \C(-,i)$ is finite dimensional for any $i \in \Ob \C$.
  We observe that the set of $j \in \Ob \C$ with $D \C(j,i) \neq 0$ is a subset of $\set{j \in \Ob \C | j \leq i}$ which is finite, since $\C$ is of $A_\infty$ type. Since $\C$ is Hom-finite, each $D \C(j,i)$ is finite dimensional. Then the result follows.
\end{proof}

As a consequence, we obtain the following fact.

\begin{proposition}\label{prop:fcg=fcp}
  The categories $\fd \C$, $\fcg \C$ and $\fcp \C$ coincide, and are contained in $\fp \C$.
\end{proposition}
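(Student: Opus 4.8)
The plan is to establish the chain \( \fcg \C \subseteq \fd \C \subseteq \fg \C \) and then close the loop with the reverse inclusions. Two facts will drive everything. First, over \emph{any} Hom-finite essentially small category a finite dimensional module is finitely generated: writing \( S = \set{i \in \Ob \C \middle| M(i) \neq 0} \) for the finite support of such an \( M \), Lemma~\ref{lem:proj-inj} turns a basis of each \( M(i) \) into a morphism \( \C(i,-)^{\dim_k M(i)} \to M \), and the induced map \( \bigoplus_{i \in S} \C(i,-)^{\dim_k M(i)} \to M \) is an epimorphism since at each degree \( j \in S \) the summand \( i = j \) already hits \( M(j) \) through \( \1_j \). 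As the source lies in \( \proj \C \), we get \( M \in \fg \C \); this argument uses neither local Noetherianity nor the type \( A_\infty \) condition, so it applies verbatim to \( \C^\op \). Second, the functor \( D \) exchanges generation and cogeneration, and by Lemma~\ref{lem:inj-fd} every object of \( \inj \C \) is finite dimensional — this is where the type \( A_\infty \) hypothesis is used.

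For \( \fcg \C \subseteq \fd \C \), I would take \( M \in \fcg \C \) with a monomorphism \( M \to I \), \( I \in \inj \C \). Being a direct summand of a finite direct sum of modules \( D \C(-,a) \), the module \( I \) is finite dimensional by Lemma~\ref{lem:inj-fd}, and a submodule of a finite dimensional module is finite dimensional; hence \( M \in \fd \C \). Together with the first fact above this already yields \( \fd \C \subseteq \fg \C \), one of the asserted containments.

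The reverse inclusion \( \fd \C \subseteq \fcg \C \) is the step I expect to be the main obstacle, because cogeneration is not directly visible on \( M \); I would route it through \( D \). If \( M \in \fd \C \), then \( D M \) is a finite dimensional \( \C^\op \)-module, hence finitely generated by the first fact applied to \( \C^\op \); pick an epimorphism \( P \to D M \) with \( P \in \proj \C^\op \). Applying \( D \) and using \( D D M \simeq M \) (valid since \( M \) is finite dimensional) gives a monomorphism \( M \to D P \), and \( D \) carries \( \proj \C^\op \) into \( \inj \C \), so \( D P \in \inj \C \) and \( M \in \fcg \C \). Thus \( \fd \C = \fcg \C \).

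It remains to treat \( \fcp \C \). The inclusion \( \fcp \C \subseteq \fcg \C \) is immediate from the definition. Conversely, given \( M \in \fcg \C = \fd \C \) with a monomorphism \( g \colon M \to I \), \( I \in \inj \C \), the cokernel \( \Cok g \) is a quotient of the finite dimensional module \( I \), hence finite dimensional, hence finitely cogenerated by the equality just proved; so \( M \) is finitely copresented. Therefore \( \fcp \C = \fcg \C = \fd \C \), and all three are contained in \( \fg \C \) by the second paragraph, completing the proof.
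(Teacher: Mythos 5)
Your proof is correct and follows essentially the same route as the paper: both directions of $\fd\C=\fcg\C$ come from Lemma~\ref{lem:proj-inj} and Lemma~\ref{lem:inj-fd}, the equality with $\fcp\C$ follows because the cokernel of the embedding into a finite dimensional injective is again finite dimensional, and containment in $\fg\C$ comes from Lemma~\ref{lem:proj-inj}(1). The only cosmetic difference is that for $\fd\C\subseteq\fcg\C$ you dualize the finitely-generated statement through $D$ rather than invoking Lemma~\ref{lem:proj-inj}(2) directly, which is the same argument in disguise.
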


\begin{proof}
  We observe by Lemma~\ref{lem:proj-inj} that finite dimensional $\C$-modules are finitely generated and finitely cogenerated.
  Then $\fd \C$ is contained in $\fg \C$ and $\fcg \C$.

  Assume $M$ is a finitely cogenerated $\C$-module and $f \colon M \to I$ is a monomorphism with $I \in \inj \C$. Lemma~\ref{lem:inj-fd} implies that $I$ is finite dimensional. Then so is $M$. Hence $\fd \C$ and $\fcg \C$ coincide.

  Moreover, $\Cok f$ is also finite dimensional. Hence it is finitely cogenerated since $\fd \C$ and $\fcg \C$ coincide. It follows that $M$ is finitely copresented.
  Therefore $\fcg \C$ and $\fcp \C$ coincide, since $\fcp \C$ is contained in $\fcg \C$.
  Then the result follows, since $\fg \C$ and $\fp \C$ coincide by Lemma~\ref{lem:fg=fp}.
\end{proof}

Observe that $\fp \C$ is a Hom-finite Krull--Schmidt abelian category;
see Lemmas~\ref{lem:Krull} and \ref{lem:fg=fp}.
We consider its stable categories $\underline{\fp \C}$ and $\overline{\fp \C}$.
The first step is to study the projectively trivial morphisms and injectively trivial morphisms in $\fp \C$.

\begin{lemma}\label{lem:fp-trivial}
  Let $f \colon M \to N$ be a morphism in $\fp \C$.
  \begin{enumerate}
    \item
      $f$ is projectively trivial in $\fp \C$ if and only if it is projectively trivial in $\Mod \C$.
    \item
      If $M \in \fd \C$ or $N \in \fd \C$, then $f$ is injectively trivial in $\fp \C$ if and only if it is injectively trivial in $\Mod \C$.
  \end{enumerate}
\end{lemma}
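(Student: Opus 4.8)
The overall strategy is to test triviality against a single, well-chosen (co)resolving morphism via Lemma~\ref{lem:trivial} and to observe that the resulting factorization condition does not depend on whether we work in $\fp\C$ or in $\Mod\C$. Since $\fp\C$ is closed under extensions in $\Mod\C$ (Lemma~\ref{lem:fg=fp}), for objects of $\fp\C$ the group $\Extc$ is the same computed in either category; hence the implication from triviality in $\Mod\C$ to triviality in $\fp\C$ is automatic in both (1) and (2), and the work lies in the reverse implications, where triviality tested against finitely presented modules must be upgraded to triviality tested against arbitrary modules.

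For (1), I would use that the target $N \in \fp\C$ admits a projective cover $g\colon P \to N$ with $P \in \proj\C$. This $P$ is projective in $\Mod\C$ and hence also in $\fp\C$, and $g$ is an epimorphism in both categories. Applying Lemma~\ref{lem:trivial}(1) to $g$ in each category shows that $f$ is projectively trivial, in either $\fp\C$ or $\Mod\C$, precisely when $f$ factors through $g$; as the factoring morphism lies in $\Homc(M,P)$, which is the same in both settings, the two notions coincide.

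For (2) with $M \in \fd\C$, I would dualize this idea using an injective cosyzygy for $M$. Being finite-dimensional, $M$ is finitely cogenerated (Proposition~\ref{prop:fcg=fcp}), so it admits a monomorphism $u\colon M \to I$ with $I \in \inj\C$; by (the proof of) Lemma~\ref{lem:inj-fd} the module $I$ is finite-dimensional, whence $I \in \fp\C$, and since $I$ is injective in $\Mod\C$ it is injective in $\fp\C$ as well. Lemma~\ref{lem:trivial}(2) applied to $u$ in both categories then identifies injective triviality of $f$, in either $\fp\C$ or $\Mod\C$, with the condition that $f$ factor through $u$, a factorization living in $\Homc(I,N)$; the two notions again coincide.

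The remaining case $N \in \fd\C$ is the main obstacle: here $M$ need not be finite-dimensional, so $\fp\C$ supplies no finite-dimensional injective into which $M$ embeds, and Lemma~\ref{lem:trivial}(2) cannot be invoked internally. To circumvent this I would truncate to the finite full subcategory $\C_{\leq n}$ on the objects $\set{0,\dots,n}$, where $n$ is chosen so that $N$ is supported on $\set{0,\dots,n}$. Truncating a projective resolution of an arbitrary $Z \in \Mod\C$ and using $\Homc(\C(a,-),N) \simeq N(a)$ (Lemma~\ref{lem:proj-inj}), which vanishes for $a>n$, yields a natural isomorphism $\Extc(Z,N) \simeq \Ext^1_{\C_{\leq n}}(Z^{\leq n},N^{\leq n})$, where $(-)^{\leq n}$ denotes restriction to $\C_{\leq n}$. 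Since $\C_{\leq n}$ has finitely many objects and finite-dimensional Hom spaces, $\Mod\C_{\leq n} \simeq \Mod A$ for a finite-dimensional algebra $A$, both $M^{\leq n}$ and $N^{\leq n}$ are finite-dimensional $A$-modules, and $(-)^{\leq n}$ carries $\fp\C$ onto all finite-dimensional $A$-modules via left Kan extension. Transporting the hypothesis through the Ext-isomorphism shows that $\Ext^1_A(W,f^{\leq n})=0$ for every finite-dimensional $A$-module $W$, that is, $f^{\leq n}$ is injectively trivial in $\mod A$. As $A$ is finite-dimensional, $M^{\leq n}$ has a finite-dimensional injective envelope in $\Mod A$, so the argument of the previous case---now applied over $A$, where the source $M^{\leq n}$ is finite-dimensional---upgrades this to $\Ext^1_A(V,f^{\leq n})=0$ for all $A$-modules $V$. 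Reading the Ext-isomorphism backwards gives $\Extc(Z,f)=0$ for every $Z \in \Mod\C$, so that $f$ is injectively trivial in $\Mod\C$.
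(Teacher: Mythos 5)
Your argument for part (1) is correct and takes a genuinely different (and more elementary) route than the paper: you test projective triviality against a single epimorphism $P \to N$ with $P \in \proj \C$ via Lemma~\ref{lem:trivial}(1), whereas the paper invokes Corollary~\ref{cor:trivial}(1), i.e.\ the Auslander--Reiten formula, with the finitely presented test object $D \Tr M$. Your treatment of part (2) in the case $M \in \fd \C$ (factor through a finite dimensional injective envelope lying in $\fp \C$) is exactly the paper's argument, and the backward implications via closure of $\fp \C$ under extensions are fine.

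The case $N \in \fd \C$ with $M$ possibly infinite dimensional, which you rightly identify as the crux, contains a genuine gap. Your truncation isomorphism $\Extc(Z,N) \simeq \Ext^1_A(Z^{\leq n}, N^{\leq n})$ is valid precisely because $N$ vanishes at objects above $n$; no such isomorphism holds with $M$ in the second argument. Hence ``transporting the hypothesis'' conflates two different groups: from $\Extc(Z,f)=0$ for $Z \in \fp\C$ you can only conclude that $\Ext^1_A(Z^{\leq n}, f^{\leq n})$ vanishes on the image of the restriction map $\Extc(Z,M) \to \Ext^1_A(Z^{\leq n}, M^{\leq n})$, and this map need not be surjective (the obstruction lives in the cohomology of the part of a projective resolution of $Z$ concentrated at objects $>n$, paired against $M$). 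So the intermediate claim that $f^{\leq n}$ is injectively trivial over $A$ is not established, and I see no reason it should hold in general. (The backward reading is fine: there you only post-compose with the isomorphism on the $N$-side.) The paper circumvents this by Corollary~\ref{cor:trivial}(2): since $N$ is finitely copresented, $N \simeq D\Tr\Tr D N \oplus I$ with $I$ injective and $\Tr D N \in \fp\C$, and injective triviality of a morphism into $D\Tr\Tr D N$ is detected by the single finitely presented test object $\Tr D N$, so the hypothesis in $\fp\C$ already suffices. You need an argument of this kind --- one whose test objects on the $M$-side are themselves finitely presented --- to close the gap.
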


\begin{proof}
  (1) We observe that $D \Tr M$ is finitely copresented and then lies in $\fp \C$ by Proposition~\ref{prop:fcg=fcp}. Then it follows from Proposition~\ref{prop:trivial}(1) that $f$ is projectively trivial in $\fp \C$ if and only if it is projectively trivial in $\Mod \C$.

  (2) If $N \in \fd \C$, it is finitely copresented by Proposition~\ref{prop:fcg=fcp}.
  Then it follows from Proposition~\ref{prop:trivial}(2) that $f$ is injectively trivial in $\fp \C$ if and only if it is injectively trivial in $\Mod \C$.

  If $M \in \fd \C$, its injective envelope in $\Mod \C$ lies in $\fp \C$ by Proposition~\ref{prop:fcg=fcp}. Then the result follows from Lemma~\ref{lem:trivial}.
\end{proof}

As a consequence of Lemma~\ref{lem:fp-trivial}, we have that $\underline{\fp \C} = \fpu \C$ and $\fdo \C$ is a full subcategory of $\overline{\fp \C}$.
Here, $\underline{\fp \C}$ is the projectively stable category of $\fp \C$, and $\fpu \C$ is the full subcategory of $\Modu \C$ formed by finitely presented $\C$-modules.

Assume $f \colon M \to N$ is an injectively trivial morphism in $\fp \C$ such that $N \in \fd \C$ but $M \notin \fd \C$.
We mention that $f$ needs not factor through some injective object in $\fp \C$.
But Lemma~\ref{lem:fp-trivial}(2) implies that $f$ is injectively trivial in $\Mod \C$. It follows from Lemma~\ref{lem:trivial} that $f$ factors through some injective $\C$-module $I$. Here, $I$ needs not lie in $\fp \C$.

There may exist some injectively trivial morphisms $f \colon M \to N$ in $\fp \C$, such that $M, N \notin \fd \C$.
In this case, we have no idea about properties of these $f$, including whether $f$ factors through some injective object in $\fp \C$ or $\Mod \C$.

\subsection{Generalized Auslander--Reiten duality}

Recall from \cite[Section~2]{Jiao2018generalized} that the \emph{generalized Auslander--Reiten duality} on $\fp \C$ consists of a pair of full categories $(\fp \C)_r$ and $(\fp \C)_l$, and a pair of functors
\[
  \tau \colon \underline{(\fp \C)_r} \To \overline{(\fp \C)_l}
  \quad \mbox{and} \quad
  \tau \colon \overline{(\fp \C)_l} \To \underline{(\fp \C)_r}.
\]
Here, $\underline{(\fp \C)_r}$ is the image of $(\fp \C)_r$ under the factor functor $\fp \C \to \fpu \C$, and $\overline{(\fp \C)_l}$ is the image of $(\fp \C)_l$ under the factor functor $\fp \C \to \overline{\fp \C}$.

The subcategories $(\fp \C)_r$ and $(\fp \C)_l$ are given as follows
\[
  (\fp \C)_r =
  \set{M \in \fp \C \middle|
    D \Extc(M,-) \colon \overline{\fp \C} \to \Mod k
    \mbox{ is representable}}
\]
and
\[
  (\fp \C)_l =
  \set{M \in \fp \C \middle|
    D \Extc(-,M) \colon \underline{\fp \C} \to \Mod k
    \mbox{ is representable}}.
\]
We mention that $(\fp \C)_r$ and $(\fp \C)_l$ are both additive.


For any $M \in (\fp \C)_l$ and $N \in \fp \C$, there exists a natural isomorphism
\[
  \Homuc (\tau^- M, N) \cong D \Extc (N, M).
\]
For any $N \in (\fp \C)_r$ and $M \in \fp \C$, there exists a natural isomorphism
\[
  \Homoc (M, \tau N) \cong D \Extc (N, M).
\]
Moreover, the functors $\tau$ and $\tau^-$ are mutually quasi-inverse equivalences.
They are called the \emph{generalized Auslander--Reiten translation functors}.

We mention the following characterizations for objects in $(\fp \C)_r$ and $(\fp \C)_l$;
see \cite[Proposition~2.4]{Jiao2018generalized}.

\begin{lemma}\label{lem:Cr-Cl}
  Let $M$ be an indecomposable object in $\fp \C$.
  \begin{enumerate}
    \item
      If $M$ is non-projective in $\fp\C$, then $M$ lies in $(\fp \C)_r$ if and only if there exists an almost split sequence ending at $M$.
    \item
      If $M$ is non-injective in $\fp\C$, then $M$ lies in $(\fp \C)_l$ if and only if there exists an almost split sequence starting at $M$.
    \qed
  \end{enumerate}
\end{lemma}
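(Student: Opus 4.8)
The two parts are interchanged by passing to the opposite abelian category $(\fp \C)^{\op}$, which is again Hom-finite Krull--Schmidt by Lemma~\ref{lem:Krull} and under which $(\fp \C)_r$ and $(\fp \C)_l$, as well as almost split sequences ending at $X$ and starting at $X$, are exchanged. Hence it suffices to treat (1). Unwinding the definition, $X \in (\fp \C)_r$ means precisely that the contravariant functor $D \Extc(X,-) \colon \overline{\fp \C} \to \Mod k$ is representable, say $D \Extc(X,-) \simeq \Homoc(-, A)$ for some $A \in \fp \C$. The plan is to identify such a representing object $A$ with the left-hand term $\tau X$ of an almost split sequence ending at $X$, and to match the element corresponding to $\overline{\1_A}$ with the class of that sequence.

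For the backward implication, start from an almost split sequence $\epsilon \colon 0 \to A \to B \xto{f} X \to 0$; here $A$ is indecomposable and $\Endc(A)$ is local by Lemma~\ref{lem:Krull}. I would define a natural transformation $\Homoc(-, A) \to D \Extc(X,-)$ through the pairing
\[
  \Homc(N, A) \times \Extc(X, N) \To k,
  \qquad (h, \xi) \longmapsto \lambda(h_* \xi),
\]
where $h_* \xi \in \Extc(X, A)$ is the pushout of $\xi$ along $h$ and $\lambda \colon \Extc(X, A) \to k$ is a fixed $k$-linear form that is nonzero on the class of $\epsilon$ and vanishes on the radical of $\Extc(X, A)$ regarded as a module over the local ring $\Endc(X)$, whose socle is spanned by $\epsilon$. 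This pairing descends to $\Homoc(N, A)$ because an injectively trivial $h$ induces the zero map on $\Extc(X, -)$ by the very definition of injective triviality, so the transformation lands in the correct stable category. The non-degeneracy of the pairing in both variables then uses the right almost split property of $f$ and the left almost split property of $g$ together with the choice of $\lambda$, and yields the desired natural isomorphism, so that $X \in (\fp \C)_r$ with $\tau X = A$.

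For the forward implication, suppose $D \Extc(X,-) \simeq \Homoc(-, A)$. Using the Krull--Schmidt property I reduce to $A$ indecomposable and non-injective (the summand of $A$ actually carrying the representation). Yoneda applied to the isomorphism sends $\overline{\1_A}$ to a nonzero form $\lambda \in D \Extc(X, A) = \Hom_k(\Extc(X, A), k)$; choosing a class $\epsilon \in \Extc(X, A)$ on which $\lambda$ is nonzero and minimal produces a non-split exact sequence $0 \to A \to B \xto{f} X \to 0$. Naturality of the isomorphism translates into the statement that, for every non-retraction $t \colon N \to X$, the pullback $t^* \epsilon$ splits, i.e. $t$ factors through $f$; since $\Endc(X)$ and $\Endc(A)$ are local, \cite[Proposition~I.4.4]{Auslander1978Functors} upgrades $f$ to an almost split sequence ending at $X$. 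This is the construction underlying \cite[Theorem~1.1]{LenzingZuazua2004Auslander}.

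I expect the main obstacle to be the non-degeneracy of the pairing in the backward direction and, throughout, the compatibility of the argument with the injectively stable category $\overline{\fp \C}$ rather than with $\Modo \C$: as the discussion after Lemma~\ref{lem:fp-trivial} shows, an injectively trivial morphism in $\fp \C$ need not factor through an injective object of $\fp \C$, so one must work with the intrinsic notion of injective triviality (morphisms killed by every $\Extc(Z,-)$) instead of with factorizations. A secondary point is that over a non-closed field $k$ the relevant socle is one-dimensional over the residue division ring $\Endc(X) / \rad \Endc(X)$, so $\lambda$ is built by composing the canonical projection onto this socle with a $k$-linear form. Finally, I note that in the present setting the membership $X \in (\fp \C)_r$ can be read off directly from Proposition~\ref{prop:AR}, which gives $D \Extc(X,-) \simeq \Homoc(-, D \Tr M)$ on $\Modo \C$ with $M = X$; since $D \Tr X$ is finite dimensional by Proposition~\ref{prop:fcg=fcp}, Lemma~\ref{lem:fp-trivial}(2) lets this isomorphism descend to $\overline{\fp \C}$, so that $\tau X \simeq D \Tr X$ and both conditions of (1) hold automatically once Proposition~\ref{prop:ass} supplies the almost split sequence.
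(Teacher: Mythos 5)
The paper does not prove this lemma at all: it is quoted from \cite[Proposition~2.4]{Jiao2018generalized} and stamped with a \(\qed\). So there is nothing internal to compare against; what you have done is reconstruct the proof of the cited result, and your reconstruction follows the standard local Auslander--Reiten correspondence (the pairing between \(\Homc(N,A)\) and \(\Extc(X,N)\) via pushout and a linear form singling out the socle class), which is indeed the argument behind \cite[Theorem~1.1]{LenzingZuazua2004Auslander} and \cite[Proposition~2.4]{Jiao2018generalized}. Your outline is sound, and you correctly identify the two points where the real work lies: (i) the non-degeneracy of the pairing in the backward direction, and (ii) in the forward direction, the reduction to an indecomposable representing object and the passage from naturality (which lives in the \emph{second} variable of \(\Extc\)) to the right almost split property (which concerns the \emph{first} variable) --- the latter genuinely requires the socle argument plus \cite[Proposition~I.4.4]{Auslander1978Functors} and is not a formal consequence of naturality as your phrasing might suggest. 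Neither point is carried out, so as written this is a correct skeleton rather than a complete proof; but since the paper itself outsources the whole statement, that is a reasonable state to leave it in. Two smaller remarks: the indecomposability of the left-hand term of an almost split sequence is a standard fact about almost split sequences, not something Lemma~\ref{lem:Krull} gives you; and one must be careful that the representing functor is \(\overline{\fp\C}(-,A)\) rather than \(\Homoc(-,A)\) computed in \(\Modo\C\) --- you flag this correctly, and your closing observation that in the present setting Proposition~\ref{prop:AR}, Proposition~\ref{prop:fcg=fcp} and Lemma~\ref{lem:fp-trivial}(2) make part (1) automatic (both sides of the equivalence always hold) is exactly the mechanism the paper exploits in the proof of Theorem~\ref{thm:gAR}. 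Your duality reduction of (2) to (1) is legitimate because the general pairing argument uses only self-dual hypotheses on \(\fp\C\); just note that the shortcut via Proposition~\ref{prop:AR} does not dualize, so for (2) you really do need the general argument.
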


Considering the above lemma, it is necessary to study the almost split sequences in $\fp \C$.

\begin{lemma}\label{lem:ass-fp}
  An exact sequence in $\fp \C$ is almost split if and only if it is an almost split sequence in $\Mod \C$.
\end{lemma}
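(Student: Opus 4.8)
The plan is to prove the two implications separately, with the forward direction being formal and the converse carrying all the content. For the "if" direction, I would argue directly from the definitions: the defining factorization condition for a right (respectively left) almost split morphism in $\fp\C$ quantifies only over test objects lying in $\fp\C$, which form a subclass of all $\C$-modules, while being a non-retraction (respectively non-section) is a property of the morphism that does not depend on the ambient category. Hence restricting the $\Mod\C$-condition to finitely presented test modules immediately yields the $\fp\C$-condition, so an almost split sequence in $\Mod\C$ whose terms are finitely presented is almost split in $\fp\C$.

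For the converse, suppose $\delta\colon 0\to X\xto{g}Y\xto{f}Z\to 0$ is almost split in $\fp\C$. First I would record the structural facts about the end term. The object $Z$ is indecomposable: if $Z=Z_1\oplus Z_2$ with both summands nonzero, then each inclusion $\iota_j\colon Z_j\to Z$ is a non-retraction (being non-surjective), hence factors as $\iota_j=f t_j$, and summing over the projections gives $\1_Z=f(t_1 p_1+t_2 p_2)$, contradicting that $f$ is a non-retraction. Moreover $Z$ is non-projective, since a non-split short exact sequence cannot have projective cokernel and $f$ is a non-retraction. As $\Endc(Z)$ is the same ring whether $Z$ is viewed in $\fp\C$ or in $\Mod\C$, indecomposability is intrinsic; and projectivity in $\fp\C$ agrees with projectivity in $\Mod\C$ for finitely presented modules, because a projective cover in $\Mod\C$ would split. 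Thus $Z$ is an indecomposable non-projective finitely presented $\C$-module.

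Next I would invoke Proposition~\ref{prop:ass}(1) to produce an almost split sequence $\eta\colon 0\to D\Tr Z\to E\to Z\to 0$ in $\Mod\C$. The crucial point is that $\eta$ in fact lies in $\fp\C$: the transpose $\Tr Z$ is finitely presented, so $D\Tr Z$ is finitely copresented, whence $D\Tr Z\in\fd\C=\fcp\C\subseteq\fg\C=\fp\C$ by Proposition~\ref{prop:fcg=fcp} and Lemma~\ref{lem:fg=fp}; and since $\fp\C$ is closed under extensions (Lemma~\ref{lem:fg=fp}), also $E\in\fp\C$. By the already-established "if" direction, $\eta$ is then almost split in $\fp\C$ as well. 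Now both $\delta$ and $\eta$ are almost split sequences in the Hom-finite Krull--Schmidt category $\fp\C$ (Lemmas~\ref{lem:Krull} and \ref{lem:fg=fp}) ending at the same indecomposable object $Z$, so by the uniqueness of almost split sequences they are isomorphic. Since $\eta$ is almost split in $\Mod\C$, so is $\delta$.

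The main obstacle, and the only place the standing hypotheses on $\C$ genuinely enter, is the verification that the almost split sequence furnished by Proposition~\ref{prop:ass} in the large category $\Mod\C$ has all three terms finitely presented; this is exactly what lets me transport the $\Mod\C$-statement down to $\fp\C$ and then match it against $\delta$ via uniqueness. Everything else — the indecomposability and non-projectivity of $Z$, the formal "if" direction, and the uniqueness of almost split sequences in a Krull--Schmidt category — is routine.
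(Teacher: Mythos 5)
Your proposal is correct and follows essentially the same route as the paper: establish the easy direction formally, then use Proposition~\ref{prop:ass}(1) together with Proposition~\ref{prop:fcg=fcp} to produce an almost split sequence in $\Mod\C$ lying in $\fp\C$, and conclude by uniqueness of almost split sequences ending at the indecomposable end term. Your extra details (indecomposability and non-projectivity of $Z$, and $E\in\fp\C$ by extension-closure) are points the paper leaves implicit, and they check out.
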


\begin{proof}
  The sufficiency is immediate. For the necessary, we assume
  \[
    \delta \colon 0 \To M \To E \To N \To 0
  \]
  is an almost split sequence in $\fp \C$. We observe that $N$ is a finitely presented non-projective $\C$-module.
  Then there exists an almost split sequence
  \[
    \epsilon \colon 0 \To D \Tr N \To E' \To N \To 0
  \]
  in $\Mod \C$ by Proposition~\ref{prop:ass}(1). We observe that $D \Tr N$ is finitely copresented.
  Proposition~\ref{prop:fcg=fcp} implies that $D \Tr N$ is finitely presented, and hence $\epsilon$ lies in $\fp \C$. Then $\epsilon$ is an almost split sequence in $\fp \C$, and hence is isomorphic to $\delta$. It follows that $\delta$ is an almost split sequence in $\Mod \C$.
\end{proof}

The following result gives the generalized Auslander--Reiten duality on $\fp \C$.
It is analogous to \cite[Proposition~4.4]{Jiao2018generalized}.

\begin{theorem}\label{thm:gAR}
  Let $\C$ be a Hom-finite category of type $A_\infty$ such that $\Mod \C$ is locally Noetherian. Then
  \[
    ( \fp \C )_r = \fp \C
  \]
  and
  \[
    ( \fp \C )_l = \add
    \left(
    \fd \C \cup \set{\mbox{injective objects in $\fp \C$}}
    \right).
  \]
  Moreover, the functors $D \Tr$ and $\Tr D$ induce the generalized Auslander--Reiten translation functors.
\end{theorem}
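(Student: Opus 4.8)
The plan is to prove the three assertions separately, in each case reducing to indecomposable objects using that $\fp\C$ is Krull--Schmidt (Lemmas~\ref{lem:Krull} and~\ref{lem:fg=fp}) together with the additivity of $(\fp\C)_r$ and $(\fp\C)_l$, and then invoking the characterizations of Lemma~\ref{lem:Cr-Cl}. The equality $(\fp\C)_r=\fp\C$ is the easy half. I would show every indecomposable $X\in\fp\C$ lies in $(\fp\C)_r$: if $X$ is projective then $\Extc(X,-)=0$, so $D\Extc(X,-)$ is representable by the zero object; if $X$ is non-projective then Proposition~\ref{prop:ass}(1) yields an almost split sequence ending at $X$ in $\Mod\C$, whose left term $D\Tr X$ is finitely copresented, hence finite dimensional and finitely presented by Proposition~\ref{prop:fcg=fcp}, so the sequence lies in $\fp\C$ and is almost split there by Lemma~\ref{lem:ass-fp}; Lemma~\ref{lem:Cr-Cl}(1) then gives $X\in(\fp\C)_r$.

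For the description of $(\fp\C)_l$ I would again pass to indecomposables. An indecomposable injective object $X$ of $\fp\C$ satisfies $\Extc(-,X)|_{\fp\C}=0$, since $\fp\C$ is closed under extensions (Lemma~\ref{lem:fg=fp}), so $D\Extc(-,X)$ is representable by the zero object and $X\in(\fp\C)_l$. For an indecomposable non-injective $X$, Lemma~\ref{lem:Cr-Cl}(2) reduces membership to the existence of an almost split sequence starting at $X$. If $X\in\fd\C$, then $X$ is finitely copresented non-injective (Proposition~\ref{prop:fcg=fcp}) and Proposition~\ref{prop:ass}(2) provides such a sequence, lying in $\fp\C$ by the same finiteness argument and almost split there by Lemma~\ref{lem:ass-fp}. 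Conversely, an almost split sequence starting at $X$ is, by Lemma~\ref{lem:ass-fp}, almost split in $\Mod\C$; its left term $X$ is then the source of an almost split sequence in $\Mod\C$, hence of the form $D\Tr Z$ for its finitely presented non-projective right term $Z$, and $D\Tr Z$ is finitely copresented, so $X\in\fd\C$ by Proposition~\ref{prop:fcg=fcp}. Thus the indecomposables of $(\fp\C)_l$ are exactly those of $\fd\C$ together with the indecomposable injective objects of $\fp\C$, and additivity yields the claimed equality.

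For the translation functors I would use Proposition~\ref{prop:AR}. The second isomorphism reads $\Homoc(X,D\Tr Y)\simeq D\Extc(Y,X)$ naturally in $X$; since $D\Tr Y\in\fd\C$, Lemma~\ref{lem:fp-trivial}(2) makes $\Homoc(X,D\Tr Y)$ agree with the stable Hom in $\Mod\C$, so this represents $D\Extc(Y,-)$ on $\overline{\fp\C}$. Comparing with the defining isomorphism $\Homoc(X,\tau Y)\simeq D\Extc(Y,X)$ and evaluating at $X=D\Tr Y$ on the identity, exactly as in the proof of Corollary~\ref{cor:trivial}, gives $\tau Y\simeq D\Tr Y$ in $\overline{(\fp\C)_l}$. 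For $\tau^-$ on $X\in(\fp\C)_l$ it suffices by additivity to treat finite dimensional $X$ with no injective summand; there $DX$ has no projective summand and $X\simeq D\Tr(\Tr DX)$, so the first isomorphism of Proposition~\ref{prop:AR} applied to $M=\Tr DX$ gives $\Extc(Y,X)\simeq D\Homuc(\Tr DX,Y)$, and dualizing the finite dimensional spaces yields $\Homuc(\Tr DX,Y)\simeq D\Extc(Y,X)$, matching the defining isomorphism for $\tau^-$ and giving $\tau^-X\simeq\Tr DX$. Injective summands contribute zero to both sides.

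I expect the main obstacle to be the hard direction of the computation of $(\fp\C)_l$: that an almost split sequence starting at a non-injective indecomposable $X$ forces $X$ to be finite dimensional. This is precisely where the asymmetry between the two sides of the duality resides, and it rests entirely on the identification $\fcp\C=\fd\C$ (Proposition~\ref{prop:fcg=fcp}) together with the invariance of almost split sequences under passage between $\fp\C$ and $\Mod\C$ (Lemma~\ref{lem:ass-fp}). A secondary point requiring care is the bookkeeping of injective objects in the $\tau^-$ step: one must check that for $X\in\fd\C$ injectivity in $\fp\C$ coincides with injectivity as a $\C$-module, so that $\Tr DX$ vanishes exactly on the injective summands; this follows because the injective envelope of a finite dimensional module is again finite dimensional by Proposition~\ref{prop:fcg=fcp}.
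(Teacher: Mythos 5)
Your proposal is correct and follows essentially the same route as the paper: reduction to indecomposables via Krull--Schmidt, Proposition~\ref{prop:ass} together with Proposition~\ref{prop:fcg=fcp}, Lemma~\ref{lem:ass-fp} and Lemma~\ref{lem:Cr-Cl} for the two subcategory computations, and Proposition~\ref{prop:AR} to identify $\tau$ and $\tau^-$ with $D\Tr$ and $\Tr D$. The only cosmetic difference is that you invoke Lemma~\ref{lem:fp-trivial}(2) where the paper cites Corollary~\ref{cor:trivial}(2) to identify the stable Hom-sets, and you spell out some steps (the zero-functor representability for projectives/injectives, the $\tau^-$ computation) that the paper leaves terse.
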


\begin{proof}
  We observe that projective objects lie in $(\fp \C)_r$.
  Let $M$ be an indecomposable non-projective object in $\fp \C$.
  Proposition~\ref{prop:ass}(1) gives an almost split sequence
  \[
    \delta \colon 0 \To D \Tr M \To E \To M \To 0.
  \]
  We observe by Proposition~\ref{prop:fcg=fcp} that $D \Tr M$ is finitely presented. Then $\delta$ is an almost split sequence in $\fp \C$. Lemma~\ref{lem:Cr-Cl}(1) implies that $M$ lies in $(\fp \C)_r$.
  Then the first equality follows.

  Observe that injective objects lie in $(\fp \C)_l$.
  Let $N$ be a finite dimensional indecomposable non-injective object in $\fp \C$.
  We observe by Proposition~\ref{prop:fcg=fcp} that $N$ is finitely copresented. Proposition~\ref{prop:ass}(2) gives an almost split sequence starting at $N$, which lies in $\fp \C$. Lemma~\ref{lem:Cr-Cl}(2) implies that $N$ lies in $(\fp \C)_l$.

  On the other hand, let $N$ be an indecomposable non-injective object lying in $(\fp \C)_l$.
  Lemma~\ref{lem:Cr-Cl}(2) implies that there exists an almost split sequence
  \[
    \delta \colon 0 \To N \To E \To M \To 0
  \]
  in $\fp \C$.
  Lemma~\ref{lem:ass-fp} implies that $\delta$ is an almost split sequence in $\Mod \C$. Since $M$ is non-projective, we observe by Proposition~\ref{prop:ass}(1) that $N \cong D \Tr M$ and is finitely copresented. Proposition~\ref{prop:fcg=fcp} implies that $N$ is finite dimensional.
  Then the second equality follows.


  We observe that $\fdo \C$ is a dense full subcategory of $\overline{(\fp \C)_l}$, since any injective object becomes zero in $\overline{\fp \C}$.
  Then $D \Tr$ and $\Tr D$ induce functors
  \[
    \tau \colon \fpu \C \To \overline{(\fp \C)_l}
    \quad \mbox{and} \quad
    \tau^- \colon \overline{(\fp \C)_l} \To \fpu \C,
  \]
  which are mutually quasi-inverse equivalences.

  Proposition~\ref{prop:AR} gives natural isomorphisms
  \[
    \phi \colon \Homoc(M, \tau N) \overset\cong\To D \Extc(N, M)
  \]
  for any $M, N \in \fp \C$, and
  \[
    \psi \colon \Homuc(\tau^- M, N) \overset\cong\To D \Extc(N, M)
  \]
  for any $M \in (\fd \C)_l$ and $N \in \fp \C$.
  Here, we mention that $\Homoc(M, \tau N)$ is the Hom-set in $\Modo \C$ by Proposition~\ref{prop:trivial}(2).
  Then the result follows.
%
\end{proof}


\section{Applications}

Let $k$ be a field. We will apply the previous results to $\FI$, $\VI$ and some certain infinite quivers in this section.

\subsection{Quivers}

Let $Q = (Q_0, Q_1)$ be a quiver, where $Q_0$ is the set of vertices and $Q_1$ is the set of arrows. For any arrow $\alpha \colon a \to b$, we denote by $s(\alpha) = a$ its source and by $t(\alpha) = b$ its target.

Every vertex $a$ is associated with a trivial path (of length 0) $e_a$ with $s(e_a) = a = t(e_a)$.
A path $p$ of length $l \geq 1$ is a sequence of arrows $\alpha_l \cdots \alpha_2 \alpha_1$ that $s(\alpha_{i+1}) = t(\alpha_i)$ for any $1 \leq i < l$. We set $s(p) = s(\alpha_1)$ and $t(p) = t(\alpha_l)$.
For any path $p$, we have $e_{t(p)} p = p = p e_{s(p)}$.
For any vertices $a$ and $b$ we denote by $Q(a, b)$ the set of paths $p$ with $s(p) = a$ and $t(p) = b$.

In this subsection, we assume $Q_0 = \N$ and $0< \norm{Q(i,j)} < \infty$ and $\norm{Q(j,i)} = 0$ for any $0 \leq i < j$.
In particular, $Q$ has a subquiver of the form
\[
  \begin{tikzcd}
    \underset0\circ \rar & \underset1\circ \rar & \underset2\circ \rar
    & \underset3\circ \rar & \cdots.
  \end{tikzcd}
\]

View $Q$ as a small category, and let $\C$ be its $k$-linearization;
see \cite[Section2.1]{GabrielRoiter1992Representations}.
Then $\C$ is a Hom-finite $k$-category of type $A_\infty$.
The category of representations of $Q$ is isomorphic to $\Mod \C$.
Denote $P_a = \C(a, -)$ and $I_a = D \C(-, a)$ for any $a \in \Ob \C$.
It is well known that $\Mod \C$ is hereditary;
see \cite[Section~8.2]{GabrielRoiter1992Representations}.

We mention the following fact.

\begin{lemma}\label{lem:fp-abel}
  The category $\fp \C$ is a hereditary abelian subcategory of $\Mod \C$ closed under extensions.
\end{lemma}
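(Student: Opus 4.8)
The plan is to derive every part of the statement from the single structural input that $\Mod \C$ is hereditary. First I would separate the three assertions packed into the conclusion: that $\fp \C$ is closed under extensions, that it is an abelian subcategory (closed under kernels and cokernels formed in $\Mod \C$), and that it is hereditary. Closure under cokernels and under extensions is the routine part and needs nothing special about $\C$: for modules over any $k$-linear category the cokernel of a morphism of finitely presented modules is finitely presented, and an extension of a finitely presented module by a finitely presented module is again finitely presented. So the genuine content is closure under kernels (which makes $\fp \C$ abelian) together with the hereditary property, and I would obtain both from heredity.

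The key lemma I would isolate is a \emph{coherence} statement: every finitely generated submodule $S$ of a module $P \in \proj \C$ is again finitely generated projective. Indeed, since $S$ is finitely generated it is the image of some morphism $P' \to P$ with $P' \in \proj \C$; as a submodule of the projective $P$ it is projective because $\Mod \C$ is hereditary, and as a quotient of $P'$ it is finitely generated, so $S \in \proj \C$. From this I would deduce the more useful form: a finitely generated submodule $S$ of an arbitrary $M \in \fp \C$ is finitely presented. To see this, fix an epimorphism $\pi \colon P_0 \to M$ with $P_0 \in \proj \C$ and $\Omega M := \Ker \pi$ finitely generated (such $\pi$ exists as $M \in \fp \C$), and pull $S$ back to $S' := \pi^{-1}(S) \subseteq P_0$. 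Then $0 \to \Omega M \to S' \to S \to 0$ shows $S'$ is finitely generated, hence a finitely generated submodule of the projective $P_0$, hence $S' \in \proj \C$ by the coherence statement; now $S' \to S$ is an epimorphism from a finitely generated projective with finitely generated kernel $\Omega M$, so $S$ is finitely presented.

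With this in hand the remaining points become formal. For closure under kernels, given a morphism $g \colon M \to N$ in $\fp \C$, the image $\Im g$ is a finitely generated submodule of $N$, hence finitely presented by the previous paragraph; then from $0 \to \Ker g \to M \to \Im g \to 0$ the term $\Ker g$ is finitely generated (as $M$ is finitely generated and $\Im g$ finitely presented) and is a submodule of $M \in \fp \C$, so it too is finitely presented. Thus $\fp \C$ is an abelian subcategory of $\Mod \C$. For heredity, any $M \in \fp \C$ sits in a short exact sequence $0 \to \Omega M \to P_0 \to M \to 0$ with $P_0 \in \proj \C$ and $\Omega M$ finitely generated; by the coherence statement $\Omega M \in \proj \C$, so this is a projective resolution of length one, and $\fp \C$ has global dimension at most one.

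The one step requiring care, and the one I regard as the heart of the matter, is the coherence statement of the second paragraph, since it is precisely where heredity is used essentially; everything afterward is bookkeeping with finitely generated and finitely presented modules. I would also flag that $\Mod \C$ need not be locally Noetherian in this setting — a finitely generated submodule of $P_a$ is projective, but a general submodule of $P_a$ may fail to be finitely generated — so one cannot simply invoke Lemma~\ref{lem:fg=fp} to reduce to $\fg \C$; the content is that heredity already yields coherence, and coherence is exactly what closure under kernels requires.
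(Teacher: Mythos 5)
Your proof is correct and rests on the same key observation as the paper's: heredity of $\Mod\C$ forces the image (equivalently, the kernel) of any morphism between finitely generated projectives to be finitely generated projective, i.e.\ $\C$ is coherent. The only difference is one of packaging --- the paper records exactly this fact about $\Ker f$ for $f\colon P\to P'$ in $\proj\C$ and then delegates the remaining bookkeeping to \cite[Proposition~2.1]{Auslander1966Coherent} and the horseshoe lemma, whereas you carry out that bookkeeping (pulling back finitely generated submodules along a presentation, the kernel--image argument, and the length-one resolution $0\to\Omega M\to P_0\to M\to 0$) by hand.
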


\begin{proof}
  Let $f \colon P \to P'$ be a morphism in $\proj \C$.
  Since $\Mod \C$ is hereditary, then $\Im f$ is projective. Therefore, the induced exact sequence
  \[
    0 \To \Ker f \To P \To \Im f \To 0
  \]
  splits, and hence $\Ker f \in \proj \C$.
  Then the result follows from \cite[Proposition~2.1]{Auslander1966Coherent} and the horseshoe lemma.
\end{proof}

We mention that $\Mod \C$ needs not be locally Noetherian in general, even though $\fp \C$ is abelian by Lemma~\ref{lem:fp-abel}.
See the following example.

\begin{example}
  Assume $Q$ is the following quiver.
  \[\begin{tikzcd}[sep = 2.5em]
    \overset0\circ \\
    \underset1\circ & \underset2\circ & \underset3\circ & \cdots & \underset{i}\circ & \cdots
    \ar[from = 1-1, to = 2-1, "\alpha_1"']
    \ar[from = 1-1, to = 2-2, out = -25, in = 120, "\alpha_2"']
    \ar[from = 1-1, to = 2-3, out = -10, in = 135, "\alpha_3"' pos = 0.63]
    \ar[from = 1-1, to = 2-4, out =   0, in = 150, phantom, "\cdots" pos = 0.8]
    \ar[from = 1-1, to = 2-5, out =   0, in = 150, "\alpha_i" near end]
    \ar[from = 1-1, to = 2-6, out =   0, in = 160, phantom, "\cdots" pos = 0.8]
    \ar[from = 2-1, to = 2-2, "\beta_2"']
    \ar[from = 2-2, to = 2-3, "\beta_3"']
    \ar[from = 2-3, to = 2-4, "\beta_4"']
    \ar[from = 2-4, to = 2-5, "\beta_i"']
    \ar[from = 2-5, to = 2-6, "\beta_{i+1}"']
  \end{tikzcd}\]
  We have the injection
  \[
    (f_1, f_2, \dots, f_i, \dots) \colon \bigoplus_{i \geq 1} P_i \To P_0.
  \]
  Here, $f_i$ is induced by $\alpha_i$. It follows that $\Mod \C$ is not locally Noetherian.
\end{example}

Recall that $Q$ is called \emph{uniformly interval finite} if there exists some integer $N$ such that $\norm{Q(a, b)} \leq N$ for any $a, b \in Q_0$;
see \cite[Definition~2.3]{Jiao2019Projective}.
We have the following characterization.

\begin{proposition}\label{prop:Q-Noeth}
  The category $\Mod \C$ is locally Noetherian if and only if $Q$ is uniformly interval finite.
\end{proposition}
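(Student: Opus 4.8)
The plan is to reduce local Noetherianity to the statement that each representable projective $P_a=\C(a,-)$ is a Noetherian module. Every finitely generated $\C$-module is a quotient of a finite direct sum of representables, and since submodules and quotients of Noetherian modules are Noetherian (and finite direct sums of Noetherian modules are Noetherian), $\Mod\C$ is locally Noetherian if and only if each $P_a$ is Noetherian, i.e.\ every submodule of $P_a$ is finitely generated. I would then record the one elementary fact about paths that makes $0$ dominant: concatenation is injective in each variable, so for $i\le j\le k$ precomposition with a fixed path $i\to j$ embeds $Q(j,k)$ into $Q(i,k)$; taking $i=0$ (which maps to every vertex, as each $Q(0,k)$ is nonempty) gives $\abs{Q(j,k)}\le\abs{Q(0,k)}=\dim P_0(k)$. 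Hence $Q$ is uniformly interval finite if and only if $\dim P_0(k)$ is bounded in $k$, and then all $P_a$ are bounded by a common $N$.

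The key technical input is a decomposition lemma: every projective $\C$-module---in particular, by heredity of $\Mod\C$, every submodule of $P_a$---is isomorphic to a direct sum of representables $\bigoplus_\lambda P_{c_\lambda}$. I would prove this by a Nakayama/projective-cover argument adapted to the grading by $\N$. Lifting a $k$-basis of the top $P/\rad P$ to generators $g_\lambda\in P(c_\lambda)$ gives a canonical map $\Phi\colon\bigoplus_\lambda P_{c_\lambda}\to P$, which is surjective because $\rad$ raises the object-degree: any submodule $L$ with $L+\rad P=P$ satisfies $L=P$ by induction on the object along $\N$. Since arrows strictly increase the vertex, every path into $c$ has length at most $c$, so $\rad^{c+1}P(c)=0$; consequently any endomorphism of $\bigoplus_\lambda P_{c_\lambda}$ inducing the identity on the top is invertible (its difference from the identity is degreewise nilpotent, so a Neumann series terminates at each object). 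Thus $\Phi$ is right minimal, and as $P$ is projective $\Phi$ splits; a split right-minimal epimorphism is an isomorphism.

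Granting the decomposition, the direction $(\Leftarrow)$ is a dimension count. If $Q$ is uniformly interval finite with bound $N$, write a submodule $M\subseteq P_a$ as $\bigoplus_\lambda P_{c_\lambda}$. For each object $c$,
\[
  \dim M(c)=\sum_{\lambda\colon c_\lambda\le c}\abs{Q(c_\lambda,c)}\ge\#\{\lambda\colon c_\lambda\le c\},
\]
because $Q(c_\lambda,c)$ is nonempty whenever $c_\lambda\le c$. Since $\dim M(c)\le\dim P_a(c)\le N$ for all $c$, the index set has at most $N$ elements, so $M$ is finitely generated, $P_a$ is Noetherian, and $\Mod\C$ is locally Noetherian.

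For $(\Rightarrow)$ I would argue by contraposition. Assuming $Q$ is not uniformly interval finite, $\dim P_0(c)$ is unbounded, and I will exhibit a non-finitely-generated submodule of $P_0$. View the paths starting at $0$ as a tree under the prefix order; comparable paths have strictly increasing targets, so the set $A_c$ of paths $0\to c$ is an antichain of size $\dim P_0(c)$, and these are unbounded. A tree with antichains of unbounded size has an infinite antichain $\{p_n\}$: if some reachable vertex has infinitely many outgoing arrows this is immediate, and otherwise the tree is finitely branching, so a K\"onig-type argument threads an infinite antichain through its infinitely many branch points. For a path $p\colon0\to d$ the submodule $\langle p\rangle\subseteq P_0$ is the image of the precomposition injection $P_d\to P_0$, so $\langle p\rangle\cong P_d$; and since the $p_n$ are pairwise incomparable, comparing basis paths shows $\sum_n\langle p_n\rangle$ is direct. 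Thus $P_0$ contains $\bigoplus_n P_{t(p_n)}$, whose top is infinite dimensional, so it is not finitely generated and $P_0$ is not Noetherian. The main obstacle is the decomposition lemma for possibly infinitely generated projectives, which is exactly what lets the dimension count control the number of generators; the extraction of a single infinite antichain from unboundedness is the second point that requires care.
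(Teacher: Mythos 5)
Your argument is correct, and in the backward direction it takes a genuinely different route from the paper. For ``locally Noetherian $\Rightarrow$ uniformly interval finite'' the two proofs agree in substance: both produce an infinite family of pairwise non-comparable paths out of $0$ and use it to embed an infinite direct sum $\bigoplus_j P_{i_j}$ into $P_0$; the paper extracts the family by a counting induction (since $\abs{Q(0,i_j)}$ can be made strictly increasing, at each stage there is a path not of the form $u p_r$ for earlier $p_r$), whereas you extract it by a tree/K\"onig argument on the prefix order — both work, and your incomparability bookkeeping (needed for the sum to be direct) is actually the cleaner of the two. The converse is where you diverge. The paper exploits that boundedness forces $\abs{Q(0,j)}$ to be eventually constant, hence $\abs{Q(j,j')}=1$ for all large $j\le j'$; a submodule $M\subseteq P_i$ then has an eventually constant dimension vector, its tail $M'$ is identified with $P_m^{\oplus\dim M(m)}$, and $M/M'$ is finite dimensional. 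You instead prove a structure theorem — every projective $\C$-module is a direct sum of representables, via lifting a basis of the top, graded Nakayama along $\N$, and local nilpotence of $\rad$ to get right minimality — and combine it with heredity of $\Mod\C$ (already recorded in the paper) to write any submodule of $P_a$ as $\bigoplus_\lambda P_{c_\lambda}$, after which the uniform bound $N$ caps $\#\Lambda$ since each $Q(c_\lambda,c)$ is nonempty for $c\ge c_\lambda$. Your route costs you the decomposition lemma (whose proof sketch is complete and correct in this $A_\infty$ setting, where $\rad^{c+1}$ vanishes at object $c$), but it buys a stronger and reusable intermediate statement — essentially the classification of projectives from the author's cited preprint on interval finite quivers — and it makes fully explicit the isomorphism $M'\simeq P_m^{\oplus\dim M(m)}$ that the paper's proof asserts with little justification. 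The paper's version is more elementary and avoids any appeal to heredity.
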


\begin{proof}
  We observe that
  \(
    \norm{Q(i, j)} \leq \norm{Q(i', j')}
  \)
  for any $i' \leq i$ and $j' \geq j$, since $Q(i',i)$ and $Q(j,j')$ are nonempty.
  Then $Q$ is uniformly interval finite if and only if
  \(
    \set{\norm{Q(0, j)} | j \in Q_0}
  \)
  is bounded.

  If
  \(
    \set{\norm{Q(0, j)} | j \in Q_0}
  \)
  is bounded, there exists some $n \in \N$ such that
  \(
    \norm{Q(0, n)} = \norm{Q(0, j)}
  \)
  for any $j \geq n$.
  Then for any $i \geq 0$, we have that $\dim P_i (j)$ coincide for all $j \geq \max \set{i, n}$.

  For any submodule $M$ of $P_i$, there exists some $m \in \N$ such that $\dim M(i) = \dim M(m)$ for any $i \geq m$.
  Consider the submodule $M'$ of $M$ such that
  \[
    M'(i) =
    \left\{
    \begin{array}{@{}ll@{}}
      M(i), & \mbox{if } i \geq m, \\
      0,    & \mbox{if } i < m.
    \end{array}
    \right.
  \]
  We observe that
  \(
    M' \cong P_m^{\oplus \dim M(m)}
  \)
  and $M / M'$ is finite dimensional.
  It follows that $M$ is finitely generated.

  We observe that Noetherian property is closed under finite direct sums and factor modules. Then $\Mod \C$ is locally Noetherian.

  If
  \(
    \set{\norm{Q(0, j)} | j \in Q_0}
  \)
  is unbounded, we consider $P_0$.
  There exists some $i_1>1$ such that
  \(
    \norm{Q(0, i_1)} > \norm{Q(0, i_1-1)} \geq 1
  \).
  Moreover, there exists some $i_2$ such that
  \(
    \norm{Q(0, i_2)} > \norm{Q(0, i_1)}
  \).
  Then at least two paths in $Q(0, i_2)$ are not the form $u p_1$ for any $u \in Q(i_1, i_2)$. We denote one of them by $p_2$.

  Inductively, for any $j \geq 2$, there exists some $i_j$ such that
  \(
    \norm{Q(0, i_j)} > \norm{Q(0, i_{j-1})}
  \).
  Then at least two paths in $Q(0, i_j)$ are not the form $u p_r$ for any $1 \leq r < j$ and $u \in Q(i_r, i_j)$. Denote one of them by $p_j$.

  We then obtain the monomorphism
  \[
    (f_1, f_2, \dots, f_j, \dots) \colon \bigoplus_{j \geq 1} P_{i_j} \To P_0,
  \]
  where $f_j$ is induced by $p_j$.
  It follows that $\Mod \C$ is not locally Noetherian.
\end{proof}

We study the generalized Auslander--Reiten duality on $\fp \C$ when $Q$ is uniformly interval finite.

For each $a \in Q_0$, we denote by $Q(a,\infty)$ the set of infinite sequences of arrows
\(
  \cdots\alpha_i\cdots\alpha_2\alpha_1
\),
such that $s(\alpha_1) = a$ and $s(\alpha_{i+1}) = t(\alpha_i)$ for any $i \geq 1$.

We introduce the representation $Y$ as follows.
For each vertex $a$, let
\(
  Y(a) = \Hom_k (\bigoplus_{p \in Q(a,\infty)} k p, k)
\).
For each arrow $\alpha \colon a \to b$, let
\(
  Y(\alpha) \colon Y(a) \to Y(b)
\)
be given by
\(
  Y(\alpha)(f)(q) = f(q\alpha)
\),
for any $f \in Y(a)$ and $q \in Q(b,\infty)$.

We mention that $Y$ is an indecomposable injective object in $\fp \C$.
Moreover, we have the following characterization of indecomposable injective objects in $\fp \C$;
see \cite[Theorem~3.11]{Jiao2019Injective}.

\begin{lemma}\label{lem:Q-inj}
  If $Q$ is uniformly interval finite, then
  \[
    \set{Y} \cup \set{I_a | a \in Q_0}
  \]
  is a complete set of indecomposable injective objects in $\fp \C$.
  \qed
\end{lemma}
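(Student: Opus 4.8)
The plan is to handle the two evident families separately and then prove completeness, reducing the genuinely infinite-dimensional part to the classification in \cite[Theorem~3.10]{JiaoInjective}. Throughout I would use that, since $\fp\C$ is closed under extensions in $\Mod\C$ by Lemma~\ref{lem:fp-abel}, an object $E\in\fp\C$ is injective in $\fp\C$ precisely when $\Extc(M,E)=0$ for every $M\in\fp\C$. Since $\fp\C$ is hereditary, each such $M$ fits into a two-term resolution $0\to Q_1\to Q_0\to M\to 0$ by finitely generated projectives, so by Lemma~\ref{lem:proj-inj} the group $\Extc(M,E)$ is computed as the cokernel of the induced map $\Homc(Q_0,E)\to\Homc(Q_1,E)$ between finite direct sums of evaluation spaces $E(a)$, $a\in\Ob\C$. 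This cokernel description is the computational engine for everything that follows.

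First I would dispose of the $I_a$. Each $I_a=D\C(-,a)$ is injective in $\Mod\C$ and, by Lemma~\ref{lem:inj-fd}, finite dimensional, hence lies in $\fp\C$; being injective in $\Mod\C$ it is a fortiori injective in $\fp\C$. Lemma~\ref{lem:proj-inj} identifies $\Endc(I_a)$ with $\C(a,a)=k$, which is local, so $I_a$ is indecomposable, and the $I_a$ are pairwise non-isomorphic. For $P_0=\C(0,-)$ the same lemma gives $\Endc(P_0)\simeq P_0(0)=\C(0,0)=k$, so $P_0$ is indecomposable; moreover $P_0(j)=k\,Q(0,j)\neq 0$ for every $j$, so $P_0$ has infinite support and cannot be isomorphic to any finite-dimensional $I_a$.

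The crux is the injectivity of $P_0$ in $\fp\C$, and this is where the uniform-interval-finite hypothesis enters decisively. By the analysis in Proposition~\ref{prop:Q-Noeth}, boundedness of $\set{\abs{Q(0,j)}\mid j\in Q_0}$ forces $\dim_k P_0(j)$ to stabilize and the structure maps $P_0(\beta)$ to become isomorphisms in high degree. I would combine this stabilization with the cokernel description above to show that, for any finitely generated projective presentation of $M\in\fp\C$, the induced map $\Homc(Q_0,P_0)\to\Homc(Q_1,P_0)$ is surjective, so that $\Extc(M,P_0)=0$; this is exactly the bookkeeping carried out by \cite[Theorem~3.10]{JiaoInjective}, which I would invoke here.

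For completeness, let $E$ be an indecomposable injective object of $\fp\C$. If $E$ is finite dimensional, then it is finitely cogenerated, so it embeds in some $I\in\inj\C$, which by Lemma~\ref{lem:inj-fd} is again finite dimensional and hence lies in $\fp\C$; injectivity of $E$ in $\fp\C$ splits this embedding, exhibiting $E$ as a summand of $I$, so $E\in\inj\C$ is an indecomposable finite-dimensional injective $\C$-module and therefore $E\simeq I_a$ for some $a$. If $E$ is infinite dimensional, then $E\simeq P_0$ by \cite[Theorem~3.10]{JiaoInjective}. I expect this last, infinite-dimensional case to be the main obstacle: one must rule out every other finitely presented infinite-dimensional indecomposable injective, and it is precisely here that the bound $\abs{Q(a,b)}\le N$ is needed to pin the isomorphism type down to $P_0$.
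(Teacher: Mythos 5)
Your proposal is correct and ultimately rests on the same foundation as the paper: the paper offers no proof at all for this lemma, simply deducing it from \cite[Theorem~3.10]{JiaoInjective}, and you likewise delegate the two substantive points (injectivity of $P_0$ in $\fp \C$ and the uniqueness of the infinite-dimensional indecomposable injective) to that citation. The surrounding reductions you supply --- the identification of injectivity in $\fp\C$ with the vanishing of $\Extc(-,E)$ via extension-closedness and Lemma~\ref{lem:fp-abel}, the indecomposability computations via Lemma~\ref{lem:proj-inj}, and the splitting argument placing any finite-dimensional indecomposable injective in $\inj\C$ --- are all sound and consistent with the paper's setup.
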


Then we can make the subcategory $(\fp \C)_l$ more explicit.

\begin{proposition}\label{prop:Q-Cl}
  Assume $Q$ is uniformly interval finite. Then
  \[
    (\fp \C)_l = \add
    \left(
    \fd \C \cup \set{Y}
    \right).
  \]
\end{proposition}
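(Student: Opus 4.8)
The plan is to apply the general characterization of $(\fp \C)_l$ from Theorem~\ref{thm:gAR}, which states that
\(
  (\fp \C)_l = \add(\fd \C \cup \set{\mbox{injective objects in $\fp \C$}}),
\)
and then to identify the injective objects concretely using the hereditary hypothesis on $Q$. First I would invoke Lemma~\ref{lem:Q-inj}, which gives that the complete set of indecomposable injective objects in $\fp \C$ is $\set{P_0} \cup \set{I_a \mid a \in Q_0}$. Substituting this into the formula from Theorem~\ref{thm:gAR} yields
\(
  (\fp \C)_l = \add(\fd \C \cup \set{P_0} \cup \set{I_a \mid a \in Q_0}).
\)
So the whole content of the proposition reduces to showing that the modules $I_a$ are redundant, i.e.\ that they already lie in $\add(\fd \C \cup \set{P_0})$.

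The key step, then, is to verify that each indecomposable injective $I_a = D\C(-,a)$ is itself finite dimensional, so that $I_a \in \fd \C$ and hence can be absorbed into the $\fd \C$ part of the additive closure. This is where I would use Lemma~\ref{lem:inj-fd}: it asserts precisely that finitely cogenerated injective $\C$-modules are finite dimensional, and each $I_a$ is finitely cogenerated (being a summand of itself, an injective generator). Under the $A_\infty$ hypothesis, the proof of Lemma~\ref{lem:inj-fd} shows that $D\C(j,a) \neq 0$ forces $j \leq a$, and since $\C$ is Hom-finite there are only finitely many such $j$ with finite-dimensional values; thus $I_a \in \fd \C$ for every $a \in Q_0$. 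Consequently $\set{I_a \mid a \in Q_0} \subseteq \fd \C$, so these generators contribute nothing new to the additive closure.

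Combining the two observations, I would conclude
\(
  (\fp \C)_l = \add(\fd \C \cup \set{P_0} \cup \set{I_a \mid a \in Q_0})
  = \add(\fd \C \cup \set{P_0}),
\)
which is exactly the asserted equality. The argument is essentially a bookkeeping simplification of the general theorem once the injectives have been pinned down, so the main obstacle is not a hard estimate but rather ensuring the correct invocation of Lemma~\ref{lem:Q-inj}: namely that $P_0$ is genuinely injective here (a consequence of the hereditary structure and uniform interval finiteness, already recorded in Lemma~\ref{lem:Q-inj}) whereas in general the projective $P_0$ need not be injective. The slightly delicate point to keep in mind is that $P_0$ is \emph{not} finite dimensional, so it cannot be absorbed into $\fd \C$ and must genuinely survive as a separate summand in the description; this is precisely why the answer is $\add(\fd \C \cup \set{P_0})$ rather than simply $\fd \C$.
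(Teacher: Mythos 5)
Your proposal is correct and follows essentially the same route as the paper: invoke Theorem~\ref{thm:gAR} to reduce to identifying the indecomposable injective objects of $\fp \C$, use Lemma~\ref{lem:Q-inj} to list them as $P_0$ and the $I_a$, and absorb the finite-dimensional $I_a$ into $\fd \C$. The paper states the finite dimensionality of the $I_a$ without explicitly citing Lemma~\ref{lem:inj-fd}, but your justification via that lemma is exactly the intended one.
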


\begin{proof}
  We observe by Theorem~\ref{thm:gAR} that an indecomposable object in $(\fp \C)_l$ is finite dimensional or an injective object in $\fp \C$.
  Lemma~\ref{lem:Q-inj} implies that an indecomposable injective object in $\fp \C$ is either $Y$ or $I_a$ for some $a \in Q_0$.
  Since every $I_a$ is finite dimensional, then the equality follows.
\end{proof}

\begin{example}
  Assume $Q$ is the following quiver.
  \[
    \begin{tikzcd}
      \underset0\circ \rar["\alpha_1"]
      & \underset1\circ \rar["\alpha_2"]
      & \underset2\circ \rar["\alpha_3"]
      & \underset3\circ \rar["\alpha_4"]
      & \cdots
    \end{tikzcd}
  \]
  We observe that $Q$ is uniformly interval finite.
  Then $\Mod \C$ is locally Noetherian by Proposition~\ref{prop:Q-Noeth}.

  For any $j \geq i \geq 0$, we denote the indecomposable $\C$-module
  \[\begin{tikzcd}[sep=scriptsize]
    X_{ij} \colon 0 \rar["0"]
    & \cdots \rar["0"]
    & 0 \rar["0"]
    & \underset{i}{k} \rar["1"]
    & \cdots \rar["1"]
    & \underset{j}{k} \rar["0"]
    & 0 \rar["0"]
    & \cdots.
  \end{tikzcd}\]
  We observe that
  \[
    \set{X_{ij} \middle| j \geq i \geq 0} \cup \set{P_i \middle| i \geq 0}
  \]
  is a complete set of indecomposable $\C$-modules.
  Here, $P_0 \cong Y$ and $X_{0j} \cong I_j$ for any $j \geq 0$.
  It follows from Theorem~\ref{thm:gAR} and Proposition~\ref{prop:Q-Cl} that
  \[
    (\fp \C)_r = \fp \C = \add
    \left(
      \set{X_{ij} \middle| j \geq i \geq 0} \cup \set{P_i \middle| i \geq 0}
    \right)
  \]
  and
  \[
    ( \fp \C )_l = \add
    \left(
      \set{X_{ij} \middle| j \geq i \geq 0} \cup \set{P_0}
    \right).
  \]
\end{example}

\subsection{FI and VI}

Assume the field $k$ is of characteristic 0.
Recall that $\FI$ is the category whose objects are finite sets and morphisms are injections, and $\VI$ is the one whose objects are finite dimensional vector spaces over a finite field $\F_q$ and morphisms are $\F_q$-linear injections.

Let $G$ be a finite group.
Recall from \cite[Definition~1.1]{GanLi2015Coinduction} that $\FI_G$ is the category whose objects are finite sets, and $\FI_G (S, T)$ is the set of pairs $(f, g)$ where $f \colon S \to T$ is an injection and $g \colon S \to G$ is an arbitrary map.
The composition of $(f, g) \in \FI_G (S, T)$ and $(f', g') \in \FI_G (T, T')$ is given by
\[
  (f', g') \circ (f, g) = (f' \circ f, g''),
\]
where
\(
  g''(x) = g'(f(x)) \cdot g(x)
\)
for any $x \in S$.
We observe that $\FI_G$ is isomorphic to $\FI$ if $G$ is the trivial group.


Given a skeleton of $\FI_G$ (or $\VI$), we will denote every object by its cardinal (or its $\F_q$-dimension) $n \in \N$.
Let $\C$ be the $k$-linearization of the skeleton.
Then $\C$ is a Hom-finite $k$-category of type $A_\infty$.
The category of $\FI_G$-modules (or $\VI$-modules) over $k$ is isomorphic to $\Mod \C$.

The following result follows from \cite[Theorem~3.7]{GanLi2015Noetherian}.

\begin{lemma}\label{lem:FI-Noeth}
  The category $\Mod \C$ is locally Noetherian.
  \qed
\end{lemma}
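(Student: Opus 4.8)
The plan is to deduce this directly from the Noetherianity theorem of Gan and Li. Recall that $\Mod \C$ being locally Noetherian means that every $\C$-submodule of a finitely generated $\C$-module is again finitely generated; equivalently, every finitely generated $\C$-module is a Noetherian object, in the sense that it satisfies the ascending chain condition on submodules. So the task is to establish this ascending chain condition.

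First I would reduce the statement to checking Noetherianity of the projective generators. The class of Noetherian objects in any abelian category is closed under subobjects, quotients, and finite direct sums. Since every finitely generated $\C$-module is by definition a quotient of a finite direct sum of representables $\C(a,-)$, it suffices to prove that each $\C(a,-)$ is a Noetherian object. Next I would invoke the isomorphism, recorded just above, between $\Mod \C$ and the category of $\FI_G$-modules (resp. $\VI$-modules) over $k$ coming from the $k$-linearization of the skeleton. Under this isomorphism the representable $\C(a,-)$ corresponds to the free module generated in degree $a$, and finite generation in our sense (a quotient of a finite sum of representables) agrees with finite generation in the sense of Gan and Li (generation in finitely many degrees). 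Their Theorem~3.7 asserts precisely that finitely generated $\FI_G$-modules and $\VI$-modules over a field of characteristic zero are Noetherian; since $k$ has characteristic $0$ and $G$ is a finite group, the hypotheses of that theorem are met, and the claim follows.

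The only real mathematical content lies in the cited theorem itself, whose proof rests on combinatorial and Gröbner-theoretic arguments of considerable depth; I would treat it as a black box here. The sole step requiring any genuine attention is the translation of conventions, namely confirming that the notion of finite generation used throughout the present paper coincides with the one employed by Gan and Li, and that the passage between the two is compatible with taking submodules. Once that dictionary is in place, the reduction to the representables and the appeal to Theorem~3.7 complete the argument, which is why the statement is recorded with no further elaboration.
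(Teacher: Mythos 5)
Your proposal is correct and follows the same route as the paper, which simply records the lemma as a consequence of Gan--Li's Theorem~3.7 with no further argument. The reduction to Noetherianity of the representables and the translation of finite-generation conventions that you spell out are exactly the routine bookkeeping the paper leaves implicit.
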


We will study the generalized Auslander--Reiten duality on $\fp \C$.

The following characterization of injective objects in $\fp \C$ is counter-intuitive;
see \cite[Theorems~1.5 and 1.7]{GanLi2015Coinduction}
and \cite[Theorems~1.9 and 5.23]{Nagpal2019VI}.

\begin{lemma}\label{lem:FI-inj}
  Every finitely generated projective $\C$-module is an injective object in $\fp \C$,
  and every indecomposable injective object in $\fp \C$ lies in either $\inj \C$ or $\proj \C$.
  \qed
\end{lemma}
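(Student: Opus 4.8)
The plan is to separate the single genuinely $\FI_G$/$\VI$-specific ingredient, the \emph{self-injectivity of the free modules}, from the purely formal consequences, which I would extract from the framework of the previous sections. Throughout, $\C$ is the $k$-linearization of a skeleton of $\FI_G$ or $\VI$ with $k$ of characteristic zero, and I write $P_a = \C(a,-)$. Since injective objects in an abelian category are closed under finite direct sums and direct summands, and $\proj\C = \add\set{P_a | a \in \Ob\C}$, the first assertion reduces to showing that each $P_a$ is an injective object of $\fp\C$.

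Because $\fp\C$ is closed under extensions in $\Mod\C$ (Lemma~\ref{lem:fg=fp}), this injectivity amounts to the vanishing $\Extc(M, P_a) = 0$ for every $M \in \fp\C$. I do not expect this to come from the abstract machinery: resolving $M$ by $0 \to K \to P \to M \to 0$ with $P \in \proj\C$ and $K \in \fp\C$, the group $\Extc(M, P_a)$ is the cokernel of $\Homc(P, P_a) \to \Homc(K, P_a)$, and the required extension property is exactly the structural self-injectivity of $\FI_G$ and $\VI$. This is supplied by \cite[Theorems~1.5 and 1.7]{GanLi2015Coinduction} and \cite[Theorems~1.9 and 5.23]{NagpalVI}. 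Conceptually, along the restriction to the underlying groupoid of automorphisms, coinduced modules are injective, and in characteristic zero the relevant group algebras $k[G \wr S_n]$ and $k[GL_n(\mathbb{F}_q)]$ are semisimple, so a finitely generated induced (that is, free) module differs from a coinduced one only by finite-dimensional injective summands; hence $P_a$ is injective in $\fp\C$. I regard this as the main obstacle, and as the step that cannot be recovered from Sections~2--3 alone.

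For the second assertion I would take an indecomposable injective object $I$ of $\fp\C$ and split into two cases. If $I$ is finite dimensional, then $I \in \fcg\C$ by Proposition~\ref{prop:fcg=fcp}, so there is a monomorphism $g\colon I \to J$ with $J \in \inj\C$; by Lemma~\ref{lem:inj-fd} such a $J$ is finite dimensional, hence lies in $\fd\C \subseteq \fp\C$. As $I$ is injective in $\fp\C$, the monomorphism $g$ splits, so $I$ is a direct summand of $J$, and therefore $I \in \inj\C$. If $I$ is infinite dimensional, I would invoke the classification of finitely generated injectives contained in the same cited theorems: combined with the first assertion, they show that the indecomposable injective objects of $\fp\C$ are exhausted by the indecomposable objects of $\inj\C$ and of $\proj\C$, forcing an infinite-dimensional such $I$ into $\proj\C$.

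In summary, the decisive inputs are the injectivity of the free modules and the completeness of the injective classification; both are the hard representation-theoretic facts of Gan--Li and Nagpal, and the role of characteristic zero is precisely to make the endomorphism group algebras semisimple. Everything else — closure of injectivity under direct sums and summands, and the splitting argument in the finite-dimensional case — is formal and uses only Proposition~\ref{prop:fcg=fcp} and Lemma~\ref{lem:inj-fd}.
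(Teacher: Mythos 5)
Your proposal is correct and takes essentially the same route as the paper, which offers no argument of its own for this lemma and simply cites the same results of Gan--Li and Nagpal that you identify as the decisive inputs. The formal glue you supply --- reducing the first assertion to the injectivity of each $\C(a,-)$ and handling the finite-dimensional indecomposable injectives via Proposition~\ref{prop:fcg=fcp}, Lemma~\ref{lem:inj-fd} and a splitting argument --- is sound and, if anything, more explicit than the paper's bare citation.
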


The above fact implies that any projectively trivial morphism in $\fp \C$ is also an injectively trivial morphism in $\fp \C$.
Therefore, $\overline{\fp \C}$ is a factor category of $\fpu \C$.
But, Theorem~\ref{thm:gAR} implies that $\fpu \C$ is equivalent to the full subcategory $\fdo \C$ of $\overline{\fp \C}$. It is somehow surprising.

We can make the subcategory $(\fp \C)_l$ more explicit.

\begin{proposition}\label{prop:FI-Cl}
  Let $\C$ be the $k$-linearization of a skeleton of $\FI_G$ or $\VI$. Then
  \[
    (\fp \C)_l = \add
    \left(
    \fd \C \cup \proj \C
    \right).
  \]
\end{proposition}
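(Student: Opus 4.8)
The plan is to combine the general structural result Theorem~\ref{thm:gAR} with the specific injective-object classification for $\FI_G$ and $\VI$ provided by Lemma~\ref{lem:FI-inj}. Theorem~\ref{thm:gAR} already tells us that $(\fp \C)_l = \add(\fd \C \cup \set{\text{injective objects in }\fp \C})$, so the entire task reduces to identifying the injective objects in $\fp \C$ and absorbing them into $\add(\fd \C \cup \proj \C)$. First I would invoke Lemma~\ref{lem:FI-inj}: since $\fp \C$ is Krull--Schmidt (Lemmas~\ref{lem:Krull} and \ref{lem:fg=fp}), every injective object in $\fp \C$ is a finite direct sum of indecomposable injective objects, and each such indecomposable lies in either $\inj \C$ or $\proj \C$.

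Next I would treat the two cases separately. For the summands lying in $\proj \C$, they are already in $\add(\proj \C)$, so nothing more is needed. For the summands lying in $\inj \C$, I would note that a finitely generated (equivalently finitely presented, by Lemma~\ref{lem:fg=fp}) object in $\inj \C$ is finitely cogenerated injective, hence finite dimensional by Lemma~\ref{lem:inj-fd} (or directly by Proposition~\ref{prop:fcg=fcp}). Thus these summands lie in $\fd \C$. Combining, every injective object in $\fp \C$ lies in $\add(\fd \C \cup \proj \C)$, giving the inclusion
\[
  \add\left(\fd \C \cup \set{\text{injective objects in }\fp \C}\right)
  \subseteq \add(\fd \C \cup \proj \C).
\]

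For the reverse inclusion I would check that both $\fd \C$ and $\proj \C$ are contained in $(\fp \C)_l$. The subcategory $\fd \C$ is visibly inside $\add(\fd \C \cup \set{\text{injectives}})$, and by the first half of Lemma~\ref{lem:FI-inj} every finitely generated projective $\C$-module is an injective object in $\fp \C$, so $\proj \C$ is contained in the class of injective objects. Hence $\add(\fd \C \cup \proj \C) \subseteq \add(\fd \C \cup \set{\text{injective objects in }\fp \C})$, and together with Theorem~\ref{thm:gAR} the two subcategories coincide, yielding $(\fp \C)_l = \add(\fd \C \cup \proj \C)$.

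I do not expect a genuine obstacle here, as the proof is essentially a bookkeeping argument once the two halves of Lemma~\ref{lem:FI-inj} are in hand. The only point requiring a little care is confirming that a finitely presented object of $\inj \C$ is indeed finite dimensional; this is where the hypothesis that $\C$ is of type $A_\infty$ enters through Lemma~\ref{lem:inj-fd} (equivalently Proposition~\ref{prop:fcg=fcp}), ensuring that the injective envelopes $D\C(-,a)$ are finite dimensional. Without that finiteness the indecomposable injectives in $\inj \C$ would not collapse into $\fd \C$, so I would make sure to cite it explicitly rather than treat it as automatic.
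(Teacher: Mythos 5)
Your proposal is correct and follows essentially the same route as the paper: invoke Theorem~\ref{thm:gAR} to reduce to identifying the injective objects of $\fp \C$, apply Lemma~\ref{lem:FI-inj} to split the indecomposable injectives between $\inj \C$ and $\proj \C$, and absorb $\inj \C$ into $\fd \C$ via Lemma~\ref{lem:inj-fd}. The paper's proof is just a terser version of yours; your explicit verification of the reverse inclusion (that $\proj \C$ consists of injective objects and hence lies in $(\fp \C)_l$) is a detail the paper leaves implicit.
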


\begin{proof}
  We observe by Theorem~\ref{thm:gAR} that an indecomposable object in $(\fp \C)_l$ is finite dimensional or an injective object in $\fp \C$.
  Lemma~\ref{lem:FI-inj} implies that an indecomposable injective object in $\fp \C$ lies in either $\inj \C$ or $\proj \C$.
  Since $\inj \C$ is contained in $\fd \C$, then the equality follows.
\end{proof}

\section*{Acknowledgements}

The author is grateful to Professor Xiao-Wu Chen for many helpful suggestions,
and thanks the referee for pointing out some errors and helpful comments.

This work was supported by the National Natural Science Foundation of China (Grant No.~11901545).


\end{document}